  \pgfplotsset{compat=newest}
	\definecolor{bluecolor}{RGB}{122,166,218}
	\definecolor{redcolor}{RGB}{213, 78, 83}
	\definecolor{orangecolor}{RGB}{231, 140, 69}
	\definecolor{greencolor}{RGB}{185, 202, 74}
	\definecolor{purplecolor}{RGB}{195, 151, 216}
\crefname{problem}{Problem}{Problems}
\newcommand{\bs}[1]{{\boldsymbol #1}}
\newcommand{\D}{{\mathcal D}}
\newcommand{\VMC}{\operatorname{V}_{\!\mathrm{MC}}}
\newcommand{\VSRD}{\operatorname{V}_{\!\mathrm{SRD}}}
\title{Optimal control under uncertainty with joint chance state constraints: almost-everywhere
bounds, variance reduction, and application to\\ (bi-)linear elliptic PDEs%
\thanks{Version from \today.\funding{R.H.\ acknowledges support by the Deutsche Forschungsgemeinschaft within CRC 154 (project B04) and by the PGMO program funded by Electricité de France. G.S.\ and F.W.\ were partially supported by the Simons Foundation/SFARI (560651, AB). G.S.\ also acknowledges support by the U.S.~Department of Energy, Office of Science Energy Earthshot Initiative under Award \#DE-SC0024721.}}}
\author{Rene Henrion\thanks{Weierstrass Institute for Applied Analysis
    and Stochastics, Berlin, Germany, \email{henrion@wias-berlin.de}}
  \and
  Georg Stadler%
  \thanks{Courant Institute of Mathematical Sciences, New York
    University, New York, USA, \email{stadler@cims.nyu.edu}, \email{fw18@nyu.edu}}
  \and Florian Wechsung\footnotemark[3]
}
\begin{document}

\maketitle

\begin{abstract}
  We study optimal control of PDEs under uncertainty with the state variable subject to joint chance constraints. The controls are deterministic, but the states are probabilistic due to random variables in the governing equation. Joint chance constraints ensure that the random state variable meets pointwise bounds with high probability. For linear governing PDEs and elliptically distributed random parameters, we prove existence and uniqueness results for almost-everywhere state bounds.
  Using the spherical-radial decomposition (SRD) of the uncertain variable, we prove that when the probability is very large or small, the resulting Monte Carlo estimator for the chance constraint probability exhibits substantially reduced variance compared to the standard Monte Carlo estimator. We further illustrate how the SRD can be leveraged to efficiently compute derivatives of the probability function, and discuss different expansions of the uncertain variable in the governing equation. Numerical examples for linear and bilinear PDEs compare the performance of Monte Carlo and quasi-Monte Carlo sampling methods, examining probability estimation convergence as the number of samples increases. We also study how the accuracy of the probabilities depends on the truncation of the random variable expansion, and numerically illustrate the variance reduction of the SRD.
\end{abstract}

\begin{keywords}
Optimal control under uncertainty, risk-averse, joint chance state constraints, spherical-radial decomposition, elliptic PDE, variance reduction.
\end{keywords}

\begin{AMS}
90C15, %
65K10, %
35Q93,  %
60H35, %
35R60. %
\end{AMS}

\tableofcontents

\section{Introduction}
\label{sec:intro}

While deterministic optimization and control problems governed by
partial differential equations (PDEs) are a well establish research
area \cite{BorziSchulz12, HinzePinnauUlbrichEtAl09, Troltzsch10}, only
over the last one or two decades researchers have started to systematically study
these problems in the presence of uncertainty, e.g., \cite{
  KouriHeinkenschlossRidzalEtAl13, %
  GarreisSurowiecUlbrich21, %
  ChenGhattas21,  %
  AlexanderianPetraStadlerEtAl17,  %
  KouriShapiro18,    %
  MilzUlbrich24       %
}.
Optimization under uncertainty introduces new theoretical and algorithmic difficulties because uncertainty can substantially increase the dimensionality of the problem. This adds complexity to optimization problems governed by PDEs, as it necessitates random space approximation and one requires adapted objectives and constraints.
Here we focus
on probabilistic constraints on the state in optimal control
problems governed by elliptic equations. In particular, we consider joint chance state constraints which require that
realizations of the distribution of states must satisfy pointwise
bound constraints with a certain probability. A
precise formulation is given next.

\subsection{Problem statement}\label{subsec:problem}
For a bounded, sufficiently regular domain $\D\subset \mathbb R^d$, typically
$d\in\{1,2,3\}$, we consider an optimal control problem under
uncertainty, i.e.:
\begin{equation}\label{eq:optcon}
  \min_{u\in U_{\!\text{ad}},\, y\in X} \mathcal J(u,y),
\end{equation}
where $U_{\!\text{ad}}$
is a convex closed set of admissible controls, and $X$ the function space of the random states. The uncertainty enters through the equation that relates the control $u$ and the state $y$. We use a probability space $(\Omega,\mathcal A,\mathbb P)$, i.e., $\omega\in \Omega$ are events, $\mathcal A$ is a sigma-algebra on $\Omega$ and $\mathbb P$ is a probability measure. The governing equation is an elliptic PDE with random variable $\xi$ defined in $(\Omega,\mathcal A,\mathbb P)$ and taking values
in a Hilbert space $\mathscr H$. The state $y(\omega)$ and the control $u$ satisfy the equation
\begin{equation}\label{eq:redstate}
A(u) y(\omega) + B \xi(\omega) + Cu  = f,
\end{equation}
where for $u\in U_{\!\text{ad}}$, $A(u):Y\mapsto
H^{-1}(\D)$ with $Y$ a subspace of $H^1(\D)$ is a linear invertible elliptic operator that is
differentiable with respect to $u$, $B:\mathscr
H\mapsto H^{-1}(\D)$ and $C:U_{\!\text{ad}}\to L^2(\D)$ are bounded linear operators,
and $f\in L^2(\D)$. 

The equation form \eqref{eq:redstate} includes
linear and bilinear PDEs, and we will give concrete examples later in
this introduction.
Additionally to \cref{eq:optcon} and \cref{eq:redstate}, for $p\in
[0,1]$ we consider the following joint chance constraints for the
state variable,
\begin{equation}\label{eq:chance}
  \mathbb P(\omega\in \Omega \,| \, \underline y( \bs x) \le y( \bs x, \omega)\le \bar y(\bs x)
  \text{ for a.\,a.\ } \bs x\in \D) \ge p,
\end{equation}
where $\underline y, \bar y\in L^2(\D)$ are given lower and upper bounds, respectively. 
The constraint \cref{eq:chance} states that
realizations of the state, which is a random variable, must be between
$\underline y$ and $\bar y$ in a pointwise sense with probability of
at least $p$.
In this paper, we develop efficient methods to approximate and solve \eqref{eq:optcon}, \eqref{eq:redstate}, and \eqref{eq:chance} numerically.

\subsection{Related literature}
Chance constraints were introduced in the 1950s \cite{charnes1958}, and fundamental contributions to their theoretical and numerical foundations were later made by Pr{\'e}kopa \cite{prekopa1995}; see \cite{Shapiro2009} for a more modern presentation.
Recently, these probabilistic constraints have been considered in the context of infinite-dimensional optimization, in particular
PDE-constrained optimization \cite{Schuster2022, FarshbafShakerGugatHeitschEtAl20, FarshbafShakerHenrionHoemberg18, geletu2020, Heitsch2024, Perez2022, Teka23, vanAckooij2024, Kouri2023, geiersbach2023optimality, ChenGhattas21, HantouteHenrionPerez19}. These works focus on
applications (for example, optimization of gas transport \cite{Heitsch2024,Schuster2022}, aeronautics \cite{caillau2018} or population growth \cite{Perez2022}), numerical approaches (for example, \cite{Kouri2023, geletu2020, ChenGhattas21}) or theoretical aspects (for example, existence and stability of solutions \cite{FarshbafShakerHenrionHoemberg18}, convergence of algorithms \cite{Teka23}, optimality conditions \cite{vanAckooij2024,geiersbach2023optimality} or explicit estimates for subdifferentials \cite{HantouteHenrionPerez19}).

Numerical approaches for chance constraints can roughly be classified in (1) methods that relax the constraints, resulting in formulations with nicer properties (e.g., convexity, differentiability) and (2) methods that approximate the original chance constraints. 
One method falling into the first class is using the Conditional Value-at-Risk, but other relaxations are possible (e.g., \cite{Nemirovski2006,Kouri2023}). %
Facilitating solution methods through relaxation of the chance constraint 
comes at the cost of potentially significant conservatism of the resulting solutions.
The second class of approaches approximate the original chance constraints. This approximation could be discrete (as in sample average approximation \cite{Pagnoncelli2009}) or smooth (e.g., \cite{pena,geletu17,caillau2018,VanAckooijHenrion14}). Consequently, solution algorithms from discrete or non-linear optimization are used. In this paper, we follow an approach based on the {\it spherical-radial decomposition} (SRD) of elliptically distributed (e.g., Gaussian) random vectors, which has been shown to be successful both in the theoretical analysis of probability functions \cite{VanAckooijHenrion14,VanAckooijHenrion17,HantouteHenrionPerez19,vanAckooij2024,geiersbach2023optimality} and in numerical solutions of chance-constrained optimization problems \cite{Perez2022,Heitsch2024,FarshbafShakerGugatHeitschEtAl20,BertholdHeitschHenrionEtAl21}.

We emphasize that \eqref{eq:chance} is a {\it joint} chance constraint, that is, it considers the probability over a whole system of random inequalities. Alternatively, one could formulate {\it individual} chance constraints, where the probability is taken over all inequalities individually:
\[
\mathbb P\left(\omega\in \Omega \,| \, \underline y( \bs x) \le y( \bs x, \omega)\le \bar y(\bs x)\right) \ge p\qquad\text{ for a.\,a.\ } \bs x\in \D.
\]
The difference lies in enforcing uniform versus pointwise constraints, %
which must be satisfied with high probability. Individual constraints are less restrictive, allowing for lower optimal objective function values. Furthermore, in specific models (e.g., when control and random parameters are separated as in \eqref{eq:redstate}), individual constraints can be reformulated into explicit deterministic equivalents. In contrast, joint constraints address the need for uniform constraint satisfaction, often reflecting practical requirements. In particular, random states that meet individual bounds with high probability at each point may still fail to meet these bounds uniformly over the entire domain with high probability (see, e.g., \cite[Sec.\ 4.2]{BertholdHeitschHenrionEtAl21}).

Note that in the limit $p\to 1$, the chance constraint \eqref{eq:chance} turns into an ``almost-sure'' constraint. Nevertheless, in general, it is not recommended to formulate an almost-sure constraint as a chance constraint because the latter degenerates in the limit to a constraint violating standard constraint qualifications \cite{geiersbach2023optimality}. We refer to \cite{GeiersbachWollner20, geiersbach2023optimality,GahururuHintermullerSurowiec21}  for theoretical and numerical approaches to treat almost-sure constraints.

\subsection{Contributions, limitations and overview}
In this paper, we make the following \emph{contributions}.
(1) We extend existence, uniqueness, and convexity results to problems with joint chance {\it almost everywhere} state constraints.
(2) We prove theoretically and illustrate numerically that, compared to standard Monte Carlo (MC) sampling, using the spherical-radial decomposition (SRD) substantially reduces the variance for estimation of low, high, or radially close-to-symmetric probabilities. 
(3) Using smoothing properties characteristic of PDE operators, we illustrate the innate dimension reduction for the random state variable and discuss its use in approximating chance constraints.
(4) We present a systematic numerical study of control problems governed by linear and bilinear elliptic problems, comparing standard and SRD-based MC and quasi-MC sampling. We find that a SRD quasi-MC method requires 1-2 orders of magnitude fewer samples than a standard MC method for the same accuracy, which directly translates to a corresponding speedup.

Our approach also has some \emph{limitations}.
(1) For the problems we consider here, the map from the uncertainty to the PDE solution (and thus the constraint) is linear. The methods can be generalized to problems where the constraint function depends nonlinearly on the random variables. However, some advantages of the proposed methods are lost with increasing nonlinearity.
(2) We also assume that for fixed control $u$, the governing equation is linear. Note, however, that we do not require that the control enter linearly, as we illustrate using a bilinear control example.
(3) For the problems considered here, the variance reduction achieved by using SRD MC sampling over standard MC sampling directly translates into computational speed-up. Although variance reduction also applies to problems where the constraint depends nonlinearly on the random variable, the increased computational cost of SRD MC in nonlinear settings may negate the advantages gained from reduced variance.

Finally, we give an \emph{overview}. \Cref{sec:existence-uniqueness} presents basic existence, uniqueness and convexity results for the case that the governing equation \eqref{eq:redstate} is linear in the control $u$. In particular, existing results are generalized to {\it almost everywhere} state constraints. In \cref{sec:spherraddec}, the spherical-radial decomposition is introduced for general elliptically distributed random variables, and reduced variance results are shown for the case of very small, large or close-to-symmetric sets. Furthermore, we provide derivatives of the probability function. In \cref{sec:application}, we apply the general framework specifically to the control problem \eqref{eq:optcon}, discuss truncation of the random variable expansion, and compute explicit gradient formulas of the probability. Finally, in \cref{sec:example-linear,sec:example-bilinear}, we perform comprehensive numerical experiments for a linear and a bilinear control problem, and discuss the results in the context of our theoretical findings.

\subsection{Examples}
Next, we present two examples that fit the setting described in
\cref{subsec:problem}. These examples are studied numerically in
\cref{sec:example-linear,sec:example-bilinear}.
\begin{example}[Tracking-type objective governed by linear PDE with
    uncertain Neumann data]\label{ex:linear}
We split the boundary of the physical domain $\D\subset\mathbb R^d$
into disjoint open sets $\partial \D_1, \partial \D_2$, and assume
given a Gaussian law $d\mu$ with realizations $\xi(\omega) \in
L^2(\partial\D_2)$. We consider the risk-neutral tracking-type optimal control problem 
\begin{equation}\label{eq:ex1-J}
        \begin{aligned}
        & \underset{\mathclap{u\in L^2(\D), \,y\in X}}{\text{minimize}}\:
        & & %
        \frac 12 \int_\Omega
  \int_\D (y(\omega) - y_d)^2 \,d\bs x\,d\mu
  + \frac\alpha 2 \int_\D u^2\,d\bs x %
        \end{aligned}
\end{equation}
subject to a governing equation with uncertain Neumann data on $\partial\D_2$
\begin{subequations}\label{eq:state_linear}
\begin{alignat}{2}
  -\Delta y(\omega) &=
  f + u \qquad &&\text{in } \D, \label{eq:state_linear1}\\
  y(\omega)  &= 0  &&\text{in }\partial\D_1, \label{eq:state_linear2}\\
  \nabla y(\omega)\cdot{\bs n} &= \xi(\omega) &&\text{in
  }\partial\D_2, \label{eq:state_linear3}
  \end{alignat}
\end{subequations}
and joint state chance constraints \eqref{eq:chance}. Here, $y_d,f\in
L^2(\D)$, $\alpha>0$ and $\bs n$ is the unit boundary normal and $X$ is the (Bochner) space of solutions of \eqref{eq:state_linear}. In
\cref{sec:example-linear}, we show that the integration over $\Omega$ in \eqref{eq:ex1-J} can be done analytically, and study methods to approximate the joint chance constraint and the effect of the
bound $p$ on the controls $u$ that minimize \eqref{eq:ex1-J}.
\end{example}

\begin{example}[Bilinear control with uncertain right hand side]\label{ex:bilinear}
  In bilinear control, the governing equation contains a term in which the control $u$ multiplies the state $y$. This structure appears, for example, in the optimal control of quantum systems \cite{BorziSalomonVolkwein08,
    BorziCiaramellaSprengel17, ItoKunisch07}. We assume an
  uncertain right hand side with realizations $\xi(\omega)\in L^2(\D)$ resulting in the governing equation
  \begin{subequations}\label{eq:state_bilin_2}
    \begin{alignat}{2}
      -\Delta y(\omega) + uy(\omega) &=
      f + \xi(\omega) \qquad &&\text{in } \D,\\
      y(\omega)  &= 0  &&\text{in }\partial\D,
    \end{alignat}
  \end{subequations}
  where $f\in H^{-1}(\D)$.
  We consider an objective that only involves the control cost, i.e., the optimization under uncertainty problem with joint chance state constraints is
  \begin{alignat}{2}
    &\underset{\mathclap{u\in U_{\!\text{ad}},\, y\in X}}{\text{minimize}}\:\:\:
    & & %
        \frac 1 2 \int_\D (u-u_0)^2\,d \bs x,
  \end{alignat}
  subject to $y$ satisfies \eqref{eq:state_bilin_2} and the joint chance constraint \eqref{eq:chance}.
  Here, $u_0\in L^2(\D)$ is a given desired control and if $U_{\!\text{ad}}$ includes for instance the pointwise bound $u\ge 0$, invertability of the PDE operator in \eqref{eq:state_bilin_2} is guaranteed. Due to the bilinear equation, this is \emph{not} a linear-quadratic problem. However, for fixed $u$, the map from the uncertain parameter $\xi$ to the
  state variable $y$ is linear.
\end{example}

\subsection{Notation glossary}
We denote the spatial domain by $\D$ and use $\Omega$ for the random space. To distinguish vectors from scalars or scalar functions, we use a bolt font. We commonly abbreviate the spherical-radial decomposition by SRD and Monte Carlo by MC.
The number of unknowns to discretize $\D$ is denoted by $n$. When we use Karhunen-Loeve expansions to describe random variables, we use $K$ for the number of expansion terms, and the index $k$. To approximate the random space $\Omega$ we denote by $N$ the number of MC samples and use the index $i$ for these samples. By $M$ we denote the number of points in $\D$, where the state constraints are evaluated, and we will use the index $j$ for this point set.

\section{Solution existence, uniqueness and convexity for linear governing equation}\label{sec:existence-uniqueness}
In this section, we consider the case where the PDE operator $A$ in
\eqref{eq:redstate} does not depend on the control, i.e., the governing equation is %
\begin{equation}\label{eq:state-lin}
  Ay(\omega) + B\xi(\omega) + Cu = f.
\end{equation}
Since $A$ is assumed to be invertible,
we can consider the (random) state variable $y$ as a function of the (deterministic) control $u$ and the (random)
variable $\xi$:
\begin{equation}\label{eq:controltostate}
y^u(\omega )=A^{-1}(f-Cu-B\xi (\omega)).
\end{equation}
We assume that the random variable $\xi$ is defined as a linear combination of finitely many functions $\xi_0, e_k\in L^2(\D)$ $(k=0,\ldots, K)$:
\begin{equation}\label{afflinrand}
    \xi (\omega ):= \xi_0 + \sum_{k=1}^K\zeta_k(\omega)
    e_{k},
\end{equation}
where $\bs \zeta=(\zeta_1,\ldots,\zeta_K)$ is a $K$-dimensional random vector on the probability space $(\Omega ,\mathcal A, \mathbb P)$.

Assume further that $U_{\!\text{ad}}\subset L^2(\D)$, and consider the probability function
\begin{equation}\label{probfunc}
\varphi \left( u\right) :=\mathbb{P}\left( \omega \in \Omega
:\underline{y}(\bs x)\leq y^u(\bs x, \omega) \leq \overline{y}(\bs x)
\text{ for a.\,a.\ } \bs x\in \mathcal{D}\right)
\end{equation}
and the associated set of controls that satisfy the chance constraint \eqref{eq:chance}, i.e.,
\begin{equation}\label{feasprob}
{U}_{\!\mathrm{pr}}:=\left\{ u\in L^2(\D):\varphi \left( u\right) \geq
p\right\}.
\end{equation}
The next proposition summarizes the properties of $\varphi(\cdot)$ and $U_{\!\text{pr}}$. 
\begin{theorem}\label{structprop}
The probability function \eqref{probfunc} is well-defined and weakly sequentially upper semicontinuous. As a consequence, the feasible set $U_{\!\mathrm{pr}}$ defined in \eqref{feasprob} is weakly sequentially closed for arbitrary $p\in [0,1]$. If the random vector $\bs \zeta$ has a log-concave density (e.g., Gaussian and many others, see \cite[Sec.\ 4.4]{prekopa1995}), then
${U}_{\!\mathrm{pr}}$ is also convex for arbitrary $p\in [0,1]$. 
\end{theorem}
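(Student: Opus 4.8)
The plan is to reduce the infinite-dimensional probability function to the measure of a finite-dimensional convex set and then exploit the affine dependence on both the control and the randomness. Using linearity in \eqref{eq:controltostate} and \eqref{afflinrand}, I would first write the state as $y^u(\omega)=g_0(u)-\sum_{k=1}^K\zeta_k(\omega)g_k$ with $g_0(u):=A^{-1}(f-Cu-B\xi_0)$ (affine in $u$) and fixed $g_k:=A^{-1}Be_k\in L^2(\D)$. For each control I then introduce
\begin{equation*}
M(u):=\Bigl\{\bs z\in\re^K:\underline y\le g_0(u)-\textstyle\sum_{k=1}^K z_k g_k\le\overline y\ \text{a.\,e.\ on }\D\Bigr\}\subset\re^K,
\end{equation*}
so that, with $\mu_{\bs\zeta}$ the law of $\bs\zeta$, the probability function becomes $\varphi(u)=\mu_{\bs\zeta}(M(u))$. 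Since $M(u)$ is the preimage of the order interval $[\underline y,\overline y]\subset L^2(\D)$ under the continuous affine map $\bs z\mapsto g_0(u)-\sum_k z_k g_k$, and that order interval is closed and convex, $M(u)$ is a closed convex (hence Borel) subset of $\re^K$. This already yields well-definedness: the event in \eqref{probfunc} is exactly $\bs\zeta^{-1}(M(u))\in\mathcal A$.

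For upper semicontinuity, let $u_n\rightharpoonup u$ weakly in $L^2(\D)$. Because $C$ and $A^{-1}$ are bounded and linear, $g_0(u_n)\rightharpoonup g_0(u)$ weakly in $L^2(\D)$ (indeed weakly in $H^1$, and strongly in $L^2$ by compact embedding, though weak convergence suffices below). The key observation controls the indicator functions pointwise: for a \emph{fixed} $\bs z$ lying in $M(u_n)$ for infinitely many $n$, the functions $g_0(u_n)-\sum_k z_k g_k$ converge weakly in $L^2$ along that subsequence to $g_0(u)-\sum_k z_k g_k$; since the order interval is convex and strongly closed, hence weakly closed, the limit again lies in $[\underline y,\overline y]$, i.e.\ $\bs z\in M(u)$. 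Therefore $\limsup_n\mathbf 1_{M(u_n)}\le\mathbf 1_{M(u)}$ pointwise, and the reverse Fatou lemma (the integrands are bounded by $1$ and $\mu_{\bs\zeta}$ is finite) gives $\limsup_n\varphi(u_n)\le\int\limsup_n\mathbf 1_{M(u_n)}\,d\mu_{\bs\zeta}\le\varphi(u)$. Weak sequential closedness of $U_{\!\mathrm{pr}}$ is then immediate: if $u_n\in U_{\!\mathrm{pr}}$ and $u_n\rightharpoonup u$, then $\varphi(u)\ge\limsup_n\varphi(u_n)\ge p$.

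For convexity under a log-concave density I would use the affine structure in $u$. Since $g_0$ is affine, $g_0(\lambda u_1+(1-\lambda)u_2)=\lambda g_0(u_1)+(1-\lambda)g_0(u_2)$, and a direct convex-combination computation (using that $[\underline y,\overline y]$ is convex) shows
\begin{equation*}
\lambda M(u_1)+(1-\lambda)M(u_2)\subseteq M\bigl(\lambda u_1+(1-\lambda)u_2\bigr)\qquad(\lambda\in[0,1]).
\end{equation*}
Prékopa's theorem guarantees that a log-concave density produces a log-concave measure $\mu_{\bs\zeta}$, so for $u_1,u_2\in U_{\!\mathrm{pr}}$ and $p>0$,
\begin{equation*}
\varphi(\lambda u_1+(1-\lambda)u_2)\ge\mu_{\bs\zeta}\bigl(\lambda M(u_1)+(1-\lambda)M(u_2)\bigr)\ge\varphi(u_1)^\lambda\varphi(u_2)^{1-\lambda}\ge p^\lambda p^{1-\lambda}=p,
\end{equation*}
using monotonicity of the measure together with the inclusion above. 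Hence $\lambda u_1+(1-\lambda)u_2\in U_{\!\mathrm{pr}}$; the case $p=0$ is trivial since then $U_{\!\mathrm{pr}}=L^2(\D)$.

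The main obstacle I anticipate is the limit passage in the a.\,e.\ pointwise state constraints under only weak convergence of the controls: one cannot take pointwise limits of $g_0(u_n)$, so the argument must route through the convexity and weak-closedness of the order interval (equivalently, one may exploit elliptic smoothing, which upgrades weak $L^2$ convergence of $u_n$ to strong $L^2$ convergence of $g_0(u_n)$ via a compact Sobolev embedding). A secondary technical point is the upper semicontinuity of $u\mapsto\mu_{\bs\zeta}(M(u))$ as the convex sets $M(u)$ vary, which I handle through the reverse Fatou lemma rather than Kuratowski set-convergence; in the convexity part one must also verify that the Minkowski combination $\lambda M(u_1)+(1-\lambda)M(u_2)$ is measurable (it is convex) and apply Prékopa's log-concavity inequality correctly.
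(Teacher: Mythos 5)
Your proof is correct and follows essentially the same route as the paper's: it exploits the affine dependence of the state on $(u,\bs\zeta)$, identifies the event as the preimage of a closed convex set (you pull everything back to $M(u)\subset\re^K$, while the paper works with a set $\hat K$ in the state space and an abstract solution operator), obtains upper semicontinuity from weak closedness of that set together with a reverse-Fatou/dominated-convergence argument on indicator functions, and derives convexity from the Minkowski inclusion $\lambda M(u_1)+(1-\lambda)M(u_2)\subseteq M(\lambda u_1+(1-\lambda)u_2)$ combined with Pr\'ekopa's theorem. The only difference is presentational: the paper factors these two steps into the appendix \cref{wsus,convexitylemma}, whereas you inline them.
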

\begin{proof}
  To show that $\varphi $ is well-defined, we verify that for any fixed control $u$ and corresponding state $y^u$, the event set
  \begin{equation}\label{eq:set1}
    \left\{ \omega \in \Omega : \underline{y}(\bs x)\leq y^u(\bs x,\omega)
    \leq \overline{y}(\bs x)\text{ for a.\ a.\ }\bs x\in\D\right\}
  \end{equation}
    belongs to the $\sigma$-algebra $\mathcal{A}$ of the probability space $\left(\Omega, \mathcal{A},\mathbb{P}\right)$. 
To this aim, we introduce the continuous linear solution operator
$S:L^2(\mathcal D)\times\mathbb R^K\to Y$ as
\[
S(u,\bs z):=A^{-1}\left( Cu+\sum_{k=1}^Kz_kBe_k\right)
\qquad \left(u\in L^2(\mathcal D),\,\bs z:=(z_1,\ldots,z_K)\in\mathbb R^K\right)
\]
and define the convex and closed set
\[
\hat{K}:=\{\tilde{y}\in H^1(\mathcal D):A^{-1}(f-B\xi_0)-\overline{y}(\bs x)\leq \tilde{y}(\bs x)
        \leq A^{-1}(f-B\xi_0)-\underline{y} (\bs x)\text{ for a.~a.\ }\bs x\in\D\}.
\]
The closedness of $\hat{K}$ follows from the fact that converging sequences in $\hat K$ also converge in $L^2(\D)$, and that the set of $L^2$-functions with almost-everywhere bounds is closed 
\cite[Sec.\ 2.5]{Troltzsch10}. 
Then, the event set in \eqref{eq:set1} coincides with the set
\begin{equation}\label{eq:set2}
\{\omega\in\Omega:S(u,\bs \zeta (\omega))\in\hat{K}\}=\bs \zeta^{-1}([
S(u,\cdot )]^{-1}(\hat{K})),
\end{equation}
which belongs to the $\sigma$-algebra $\mathcal A$ because the set $[S(u,\cdot )]^{-1}(\hat{K})$ is closed as the preimage of the closed set $\hat{K}$ under the continuous mapping $S(u,\cdot )$. Therefore, $\varphi$ is well-defined. 

To show the remaining statements, we apply \cref{convexitylemma,wsus} from the appendix. In order to do so, we define the abstract data there out of the concrete data used here: %
\[
U:=L^2(\mathcal D), \,\tilde{Y}:=Y, \,K:=\hat{K}, \,g:=S.
\]
We verify first that the assumptions of \cref{wsus} are satisfied, namely that $K$ is weakly closed and $g$ is weakly sequentially continuous. This is indeed the case because, first, $\hat{K}$ is weakly closed as a closed, convex subset of the Banach space $Y$ and, second, because $S$ is weakly sequentially continuous as a continuous linear operator. Then, by \cref{wsus}, the probability function $\tilde{\varphi}$ defined there is weakly sequentially upper semicontinuous. On the other hand, by the coincidence of \eqref{eq:set1} and \eqref{eq:set2}, the probability function $\tilde{\varphi}$ of \cref{wsus} is identical to the probability function $\varphi$ in \eqref{probfunc} in the concrete setting of the current theorem. Thus, $\varphi$ is weakly sequentially upper semicontinuous. 

Likewise, the assumptions of \cref{convexitylemma} are satisfied when we additionally assume in our theorem that $\bs\zeta$ has a log-concave density. Hence, with the definitions made before, the set
\[
M:=\{u\in U:\mathbb{P}(g(u,\bs\zeta )\in K)\geq p\}
=\{u\in U:\mathbb{P}(S(u,\bs\zeta )\in\hat{K})\geq p\}
\] 
defined in that lemma is convex for arbitrary $p\in [0,1]$. Taking into account the equivalence of \eqref{eq:set2} and \eqref{eq:set1} mentioned above as well as the definition of \eqref{feasprob} via \eqref{probfunc}, we end up at the identity $M={U}_{\!\mathrm{pr}}$, so that ${U}_{\!\mathrm{pr}}$ is convex for any $p\in [0,1]$, as was to be shown.
\end{proof}

Note that in the original formulation \eqref{eq:optcon}, we consider the control $u$ and the state $y$ as independent variables that are connected by the equality constraint
\eqref{eq:redstate}.
The previous result allows us to argue existence and uniqueness for the reduced form of \eqref{eq:optcon}, \eqref{eq:redstate}, \eqref{eq:chance}, in which we consider the state $y=y^u$ as a function of the control $u$ using \eqref{eq:state-lin}:
\begin{equation}\label{optprob}
  \min_{u\in {U}_{\!\mathrm{ad}}\cap {U}_{\!\mathrm{pr}}}\hat{\mathcal
    J}(u), \quad \text{where } \hat{\mathcal J}(u):=\mathcal J(u,y^u).
\end{equation}
\begin{theorem}
    Assume that $\hat{\mathcal J}$ is coercive, convex, and lower semicontinuous, and that ${U}_{\!\mathrm{ad}}\cap {U}_{\!\mathrm{pr}}\neq \emptyset$.
   Then, \eqref{optprob} admits a solution. The same holds if we replace the coercivity of $\hat{\mathcal J}$ by the additional boundedness of ${U}_{\!\mathrm{ad}}$. If, in either of the two settings, $\hat{\mathcal J}$ is strictly convex and $\bs\zeta$ has a log-concave density, then \eqref{optprob} has a unique solution. 
\end{theorem}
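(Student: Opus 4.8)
The plan is to invoke the direct method of the calculus of variations in the Hilbert space $L^2(\D)$, using that the admissible set $U_{\!\mathrm{ad}}\cap U_{\!\mathrm{pr}}$ is weakly sequentially closed. First I would record that $U_{\!\mathrm{ad}}$ is convex and closed by hypothesis, hence weakly sequentially closed, while $U_{\!\mathrm{pr}}$ is weakly sequentially closed by \cref{structprop}; consequently their intersection is weakly sequentially closed, and it is nonempty by assumption.

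Next, I would take a minimizing sequence $(u_n)\subset U_{\!\mathrm{ad}}\cap U_{\!\mathrm{pr}}$, so that $\hat{\mathcal J}(u_n)\to \inf_{U_{\!\mathrm{ad}}\cap U_{\!\mathrm{pr}}}\hat{\mathcal J}$. In the coercive case, coercivity forces $(u_n)$ to be bounded in $L^2(\D)$; in the alternative setting, boundedness of $U_{\!\mathrm{ad}}$ delivers the same bound directly, which is exactly where that assumption substitutes for coercivity. Since $L^2(\D)$ is reflexive, bounded sequences admit weakly convergent subsequences, so I would extract $u_{n_k}\rightharpoonup u^\star$. Weak sequential closedness of the feasible set then gives $u^\star\in U_{\!\mathrm{ad}}\cap U_{\!\mathrm{pr}}$.

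To finish the existence part, I would use the standard fact that a convex and (strongly) lower semicontinuous functional on a Banach space is weakly sequentially lower semicontinuous; evaluating along the subsequence yields $\hat{\mathcal J}(u^\star)\le \liminf_k \hat{\mathcal J}(u_{n_k})=\inf_{U_{\!\mathrm{ad}}\cap U_{\!\mathrm{pr}}}\hat{\mathcal J}$, so $u^\star$ is a minimizer. For uniqueness, when $\bs\zeta$ has a log-concave density \cref{structprop} guarantees that $U_{\!\mathrm{pr}}$, and hence $U_{\!\mathrm{ad}}\cap U_{\!\mathrm{pr}}$, is convex; a strictly convex functional attains its infimum over a convex set at no more than one point, which together with existence yields the unique solution.

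Since this is the classical direct method, I do not anticipate a genuine obstacle. The only points needing care are the routine reductions that convexity plus lower semicontinuity upgrades to weak sequential lower semicontinuity and that reflexivity of $L^2(\D)$ supplies weak sequential compactness of bounded sets, together with the bookkeeping that pins down precisely where coercivity, respectively boundedness of $U_{\!\mathrm{ad}}$, is used.
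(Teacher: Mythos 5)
Your proposal is correct and follows essentially the same route as the paper: both establish weak sequential closedness of $U_{\!\mathrm{ad}}\cap U_{\!\mathrm{pr}}$ via \cref{structprop}, upgrade convexity plus lower semicontinuity of $\hat{\mathcal J}$ to weak sequential lower semicontinuity, obtain weak compactness from coercivity or from boundedness of $U_{\!\mathrm{ad}}$ in the reflexive space $L^2(\D)$, and derive uniqueness from strict convexity over the convex feasible set. The only difference is that you spell out the minimizing-sequence argument explicitly where the paper cites the standard existence result directly.
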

\begin{proof}
By convexity and lower semicontinuity, $\hat{\mathcal J}$ is
weakly sequentially lower semicontinuous.  Since ${U}_{\!\mathrm{ad}}$ is closed and convex, it is also weakly sequentially closed. With ${U}_{\!\mathrm{pr}}$ being 
weakly sequentially closed by \cref{structprop}, ${U}_{\!\mathrm{ad}}\cap{U}_{\!\mathrm{pr}}$ is  weakly sequentially closed.  Since a coercive, weakly sequentially lower semicontinuous function
attains its infimum over a nonempty and weakly sequentially closed set, there exists a solution to \eqref{optprob}.

Assume now that $\hat{\mathcal J}$ is not necessarily coercive but instead ${U}_{\!\mathrm{ad}}$ is additionally bounded. Then, as a closed, convex, and bounded subset of a reflexive Banach space, it is weakly sequentially compact. As a consequence, the nonempty intersection with the weakly sequentially closed set ${U}_{\!\mathrm{pr}}$ is weakly sequentially compact too.
Now the existence of a solution to problem \eqref{optprob} follows in a classical way from the weak sequential lower semicontinuity of $\hat{\mathcal J}$. 
If $\bs \zeta$ has a log-concave density, then ${U}_{\!\mathrm{pr}}$ is convex by \cref{structprop}. Therefore, ${U}_{\!\mathrm{ad}}\cap{U}_{\!\mathrm{pr}}$ is convex and the result follows from the strict convexity of $\hat{\mathcal J}$.
\end{proof}

\section{Spherical-radial decomposition}\label{sec:spherraddec}
In this section, we revisit the {\it spherical-radial decomposition} (SRD) as a methodology for addressing chance constraints in a slightly more general context than required for our target optimal control problem \cref{eq:optcon}, \cref{eq:redstate} and \cref{eq:chance}. In particular, in \cref{genset,varreduc}, we assume an elliptically distributed random variable (generalizing the Gaussian distribution in our target problem) and that the constraint function is convex (generalizing the linear constraint function needed in \cref{eq:chance}).

The SRD has a somewhat greater level of complexity compared to smoothing methods \cite{caillau2018,Schuster2022,pena} and sample average approximation \cite{Pagnoncelli2009}, which are applicable to any distribution accessible through sampling.
Although the SRD is confined to a specific class of multivariate distributions, including Gaussian distributions, it capitalizes on their intrinsic properties to provide three significant benefits: 
Firstly, it facilitates the analysis of the original problem (e.g., Lipschitz continuity or differentiability of the chance constraint) without necessitating approximations at the outset \cite{VanAckooijHenrion14,VanAckooijHenrion17,HantouteHenrionPerez19}. This approach has been utilized in \cite{vanAckooij2024,geiersbach2023optimality} to derive optimality conditions for a continuous PDE optimal control problem. Secondly, the SRD has reduced variance in probability estimation compared to both direct MC and quasi-MC sampling, which is advantageous for numerically solving chance-constrained optimization problems. Thirdly, the SRD provides explicit gradient expressions for the probability function in both the original and numerically approximated formulations. This is crucial for addressing chance constraints through nonlinear optimization methods. While gradients for numerical approximations are also obtainable with smoothing methods, they cannot be derived from single samples, unlike when using the SRD.
The first aspect is less significant in this context as our focus is on numerical solutions rather than theoretical analysis. The second aspect, concerning variance reduction, has not been systematically explored so far, and we shall examine it more thoroughly in \cref{varreduc}. As for the third aspect, we will derive and employ explicit gradient formulas for the probability functions to solve the optimal control problems outlined in \cref{ex:linear,ex:bilinear}.

\subsection{General setting}\label{genset}
It is well known \cite[eq.\ (2.12)]{fang} that {\it elliptically} distributed $n$-dimensional random vectors $\bs\zeta$ admit a decomposition
\begin{equation}\label{ellipt}
\bs\zeta =\bs m +\tau L\bs\theta ,    
\end{equation}
where $\bs m\in\mathbb{R}^n$, $L\in \mathbb R^{n\times k}$, $k\leq n$, $\tau$ is a one-dimensional non-negative random variable, and $\bs\theta$ is a random vector uniformly distributed on the unit sphere $\mathbb{S}^{k-1}$ of $\mathbb{R}^k$. Moreover, $\tau$ and $\bs\theta$ are independently distributed. With $\tau$ and $\bs\theta$ playing the role of a radial and a spherical variable (in the sense of polar coordinates), respectively, \eqref{ellipt} is referred to as the  spherical-radial decomposition of $\bs \zeta$. A prominent example is a (nondegenerate) Gaussian random vector $\bs \zeta\sim\mathcal{N}(\bs m,\Sigma)$, which yields the decomposition \eqref{ellipt} with $k=n$, $\tau$ having a chi-distribution with $n$ degrees of freedom and $L$ satisfying $LL^T=\Sigma$. Accordingly, the Gaussian probability of some measurable subset $M\subseteq\mathbb{R}^n$ can be represented as
\begin{equation}\label{gaussdec}
\mathbb{P}(\bs \zeta\in M)=\int\limits_{\bs v\in\mathbb{S}^{n-1}}
\mu_\chi (\{r\geq 0:\bs m+rL\bs v\in M\})d\mu_U(\bs v),
\end{equation}
where $\mu_\chi$ is the one-dimensional Chi-distribution with $n$ degrees of freedom and $\mu_U$ is the uniform distribution over 
$\mathbb{S}^{n-1}$.

In this paper, we use the spherical-radial decomposition (SRD) in combination with the inequality $g(u,\bs\zeta )\leq 0$, where $g:U\times\mathbb{R}^n\to\mathbb{R}$, $U$ is a Banach space of controls and $\bs\zeta\sim\mathcal{N}(\bs m,\Sigma)$ is an $n$-dimensional Gaussian random vector. We assume that $g(u,\cdot )$ is convex for all $u\in U$.
Defining the probability function $\varphi:U\to [0,1]$ as
\begin{equation}\label{abstractcc}
\varphi (u):=\mathbb{P}(g(u,\bs \zeta )\leq 0)\qquad (u\in U), 
\end{equation}
\eqref{gaussdec} yields that
\begin{equation*}
\varphi (u)=\int\limits_{\bs v\in\mathbb{S}^{n-1}}
\mu_\chi (\{r\geq 0:g(u,\bs m+rL\bs v\leq 0\})d\mu_U(\bs v)\qquad (u\in U).
\end{equation*}
A considerably more convenient representation can be provided, if the mean $\bs m$ is strictly feasible for $u\in U$: 
\begin{equation}\label{slatercond}
g(u,\bs m)<0.
\end{equation}
In \cite[Prop.~3.11]{VanAckooijHenrion14} it was shown that \eqref{slatercond} holds if
$\varphi (u)\geq 0.5$ and there exists \emph{any} Slater point $\bs z\in\mathbb{R}^n$ (not necessarily $\bs m$), i.e., $g(u,\bs z)<0$. %
Both conditions are mild; the latter is because the target probability level $p$ in a chance constraint is usually close to one.
Under \eqref{slatercond}, the one-dimensional set 
$
\{r\geq 0:g(u,\bs m+rL\bs v)\leq 0\}
$
reduces to the interval $[0,\rho (u,\bs v)]$,
where 
\begin{equation}\label{rhodef}
\rho \left( u,\bs v\right) :=\sup \left\{ r\geq 0:g\left( u,\bs m+rL\bs v\right)
\leq 0\right\}. 
\end{equation}
Note that $\rho \left( u,\bs v\right)=\infty$ is possible.
Hence, 
\[
\mu_\chi (\{r\geq 0:g(u,\bs m+rL\bs v\leq 0\})=\mu_\chi ([0,\rho (u,\bs v)])=F_\chi (\rho (u,\bs v)) - F_\chi (0)=F_\chi (\rho (u,\bs v)),
\]
where $F_\chi$ denotes the distribution function of the one-dimensional Chi-distribution with $n$ degrees of freedom. Here, to include the case $\rho \left( u,\bs v\right)=\infty$, we adopt the convention that $F_\chi (\infty)=1$. Consequently, the probability function simplifies to
\begin{equation}\label{spherint}
\varphi (u)=\int\limits_{\bs v\in\mathbb{S}^{n-1}}F_\chi (\rho (u,\bs v))d\mu_U(\bs v)\qquad (u\in U).
\end{equation}
For numerical purposes, the spherical integral \eqref{spherint} is usually approximated by a finite sum
\begin{equation}\label{srdestimator}
\varphi (u)\approx\tilde{\varphi}(u):= N^{-1}\sum\limits_{i=1}^NF_\chi \left(\rho (u,\bs v_i)\right)\qquad (u\in U),
\end{equation}
where $\{\bs v_i\}_{i=1}^N\subseteq\mathbb{S}^{n-1}$ are samples uniformly distributed on the sphere. Such samples can be obtained, for example, by normalizing to unit length a set of MC or quasi-MC samples of the standard Gaussian distribution $\mathcal N(0,I)$ in $\mathbb R^n$. For more efficient methods of creating such samples, see, e.g.,
\cite{Aistleitner2012}.

Although the following analysis focuses on Gaussian distributions, the methods also apply to other elliptical distributions, including multivariate t-distributions, symmetric Laplace, or hyperbolic distributions. The main difference lies in the precise form of the one-dimensional radial distribution (i.e., the Chi-distribution for the Gaussian case), whereas the spherical component remains consistent. Additionally, techniques from the Gaussian framework can be adapted to Gaussian-like distributions, including log-normal, truncated Gaussian, and Gaussian mixture distributions.

\subsection{Variance reduction by spherical-radial decomposition}\label{varreduc}
Here, we assume \eqref{slatercond} to ensure the validity of the form \eqref{spherint}. We will show that for large sets (and thus high probability), for small sets
(and thus low probability), and for radially near-symmetric sets about the mean, the variance ratio of the
probability estimators obtained with the SRD and with
standard MC approaches zero (as both tend to zero
individually). We fix a control $u$ (which does not play any role in this analysis) and recall the standard MC estimator %
\begin{equation}\label{eq:stdMC}
\varphi (u)\approx N^{-1}\#\{i\in\{1,\ldots ,N\}:g(u,\bs z_i)\leq 0\}=N^{-1}\sum\limits_{i=1}^N\mathcal{I}_{\{\bs z\in\mathbb{R}^n:g(u,\bs z)\leq 0\}}(\bs z_i),
\end{equation}
where $\{\bs z_i\}_{i=1}^N\subseteq\mathbb{R}^{n}$ are samples of
the Gaussian random vector $\bs \zeta$ and $\mathcal {I}_C$ refers to the
characteristic function of the set $C$. In contrast, the
estimator based on the SRD with the MC
samples $\{\bs v_i\}_{i=1}^N\subseteq\mathbb{S}^{n-1}$ on the sphere is \eqref{srdestimator}. Since the effect of the sample size $N$ on the variance is the same for both estimators, we may restrict ourselves to compare the variances of the elementary estimators 
\begin{equation}\label{SRD-MC}
\VMC:=\operatorname{Var}\mathcal {I}_{\{\bs z\in\mathbb{R}^n:g(u,\bs z)\leq 0\}}(\bs \zeta ),\qquad
\VSRD:=\operatorname{Var}F_\chi (\rho(u,\bs \theta )),
\end{equation}
where $\bs \zeta\sim\mathcal{N}(\bs m,\Sigma)$, and
$\bs \theta\sim\mathcal{U}(\mathbb{S}^{n-1})$ is uniformly distributed 
on the sphere.
It is well known that $\VMC=p(1-p)$, where 
\begin{equation}\label{pdef}
p=\varphi (u)=\mathbb{P}(\{ g(u,\bs \zeta)\leq 0\})=\mathbb{E}\mathcal {I}_{\{\bs z\in\mathbb{R}^n:g(u,\bs z)\leq 0\}}(\bs \zeta )=\mathbb{E}F_\chi (\rho(u,\bs \theta )).
\end{equation}
As shown in~\cite[eq.~(1.5)]{VanAckooijHenrion14}, the estimate $\VSRD\leq \VMC$ holds.
The following example shows
that the variances can be equal.
\begin{example}\label{varredcounterex}
Consider the function $g(u,\bs z):= z_1$, so that the inequality $z_1\leq
0$ defines a closed half-space with the origin on its boundary. Suppose
further that $\bs \zeta\sim\mathcal{N}(0,I)$. Then, clearly,
$p=1/2$, from which follows $\VMC=1/4$. On the other hand, the definition
\eqref{rhodef} of $\rho$ implies that
\[
\rho \left( u,\bs v\right) :=\sup \left\{ r\geq 0:r v_1\leq 0\right\}=\left\{\begin{array}{cc}
 0    & \mbox{if } v_1\geq 0,\\
\infty   & \mbox{if } v_1< 0.
\end{array}\right.
\]
Since $\bs \theta$ is uniformly distributed on the sphere, one concludes that, by symmetry, the events $\theta_1\geq 0$ and $\theta_1< 0$  occur with the same probability 1/2. Thus,
$
\mathbb{P}(\rho (u,\bs \theta)=0)=\mathbb{P}(\rho (u,\bs \theta)=\infty)=1/2
$,
and therefore both events $F_\chi (\rho (u,\bs \theta))=0$ and $F_\chi (\rho (u,\bs \theta))=1$ happen with probability 1/2. This means that $\mathbb{E}F_\chi (\rho (u,\bs \theta))=\mathbb{E}[F_\chi (\rho (u,\bs \theta))]^2=1/2$. Therefore,
\[
\VSRD=\mathbb{E}[F_\chi (\rho (u,\bs \theta))]^2-[\mathbb{E}F_\chi (\rho (u,\bs \theta))]^2 =1/4=p(1-p)=\VMC.
\]
\end{example}

This example shows that $\VSRD \le \VMC$ cannot be improved in general. However, in some situations, $\VSRD$ is substantially smaller than $\VMC$, as shown in the following two lemmas:
\begin{lemma}\label{lem:varred1}
With $p$ from \eqref{pdef} and $\rho^{\inf/\sup}:=\inf/\sup\{\rho (u,\bs v)\mid \bs v\in\mathbb{S}^{K-1}\}$, we have
\[
\VSRD\leq\min\{(1-p)(p-F_\chi(\rho^{\inf} )),p(F_\chi(\rho^{\sup})-p)\}.
\]
\end{lemma}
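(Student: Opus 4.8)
The plan is to treat $\VSRD$ as the variance of a single bounded random variable $W:=F_\chi(\rho(u,\bs\theta))$, where $\bs\theta\sim\mathcal U(\mathbb S^{K-1})$, and to exploit that $W$ has a restricted range. By \eqref{pdef} we have $\mathbb E[W]=p$ and $\VSRD=\operatorname{Var}(W)=\mathbb E[W^2]-p^2$, so the whole task reduces to an upper bound on the second moment $\mathbb E[W^2]$, which I will obtain in two ways.

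First I would record the range of $W$. Since $F_\chi$ is nondecreasing and $\rho^{\inf}\le\rho(u,\bs v)\le\rho^{\sup}$ for every $\bs v\in\mathbb S^{K-1}$, monotonicity gives $a\le W\le b$ almost surely, with $a:=F_\chi(\rho^{\inf})$ and $b:=F_\chi(\rho^{\sup})$. Moreover $0\le W\le 1$ because $F_\chi$ is a distribution function, and in particular $a\le p\le b$.

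The two entries of the minimum then arise from two \emph{different} enclosing intervals via an elementary quadratic support inequality. For the first bound I would use $a\le W\le 1$: then $(W-a)(W-1)\le 0$ pointwise, i.e. $W^2\le(1+a)W-a$; taking expectations and inserting $\mathbb E[W]=p$ gives $\mathbb E[W^2]\le(1+a)p-a$, hence $\operatorname{Var}(W)\le(1+a)p-a-p^2=(1-p)(p-a)$. For the second bound I would instead use $0\le W\le b$: then $W(W-b)\le0$, i.e. $W^2\le bW$, so $\mathbb E[W^2]\le bp$ and $\operatorname{Var}(W)\le bp-p^2=p(b-p)$. Taking the minimum of the two estimates and substituting back $a=F_\chi(\rho^{\inf})$ and $b=F_\chi(\rho^{\sup})$ yields the claim.

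I expect no serious technical obstacle; the only point that requires care in the plan is recognizing that the asserted estimate is not a single inequality but the minimum of two, each obtained by applying the quadratic support bound to a distinct interval, $[a,1]$ for the first and $[0,b]$ for the second. The measurability and integrability of $W$ needed to speak of its variance are already guaranteed by the well-definedness and the representation \eqref{spherint} established earlier, and the case $\rho^{\sup}=\infty$ is harmless under the convention $F_\chi(\infty)=1$ adopted above.
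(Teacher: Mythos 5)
Your proposal is correct and is essentially the paper's own argument: the second bound is obtained there in exactly your form ($W^2\le F_\chi(\rho^{\sup})\,W$ pointwise, then take expectations), and the first bound is the same quadratic inequality $(W-a)(W-1)\le 0$, merely packaged in the paper via $\operatorname{Var}(W)=\operatorname{Var}(1-W)$ and the estimate $(1-W)^2\le(1-F_\chi(\rho^{\inf}))(1-W)$. No gaps; the two derivations coincide after expanding.
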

\begin{proof}
From \eqref{pdef}, it follows that
\[
\VSRD=\mathbb{E}[F_\chi (\rho (u,\bs \theta))]^2-[\mathbb{E}F_\chi (\rho (u,\bs \theta))]^2
\leq\mathbb{E}[F_\chi(\rho^{\sup})F_\chi (\rho (u,\bs \theta))]-p^2=p(F_\chi(\rho^{\sup})-p).
\]
For the second estimate, we use the identity $\operatorname{Var} X=\operatorname{Var} (1-X)$, which holds for any random variable $X$, and obtain that
\begin{align*}
\VSRD&= 
\mathbb{E}[1-F_\chi (\rho (u,\bs \theta))]^2-[\mathbb{E}(1-F_\chi (\rho (u,\bs \theta)))]^2\\&\leq\mathbb{E}[(1-F_\chi(\rho^{\inf}))(1-F_\chi (\rho (u,\bs \theta)))]-(1-p)^2\\
&=(1-F_\chi(\rho^{\inf}))(1-p)-(1-p)^2=(1-p)(p-F_\chi(\rho^{\inf} )).
\end{align*}
\end{proof}
This lemma implies that for $\rho^{\inf}\to\infty$ (growing sets with growing probability), one has that
\begin{equation}\label{eq:SRD/MC}
\frac{\VSRD}{\VMC}\leq\frac{(1-p)(p-F_\chi(\rho^{\inf}))}{p(1-p)}=1-\frac{F_\chi(\rho^{\inf})}{p}\to 0.
\end{equation}
Here, we used that $\rho^{\inf}\to\infty$ implies $F_\chi(\rho^{\inf})\to 1$ and that $F_\chi(\rho^{\inf})\leq\mathbb{E}F_\chi (\rho (u,\bs \theta))=p$ necessarily implies $p\to 1$. Thus, the ratio between the variances of the SRD and the standard MC estimators becomes arbitrarily small for high probabilities $p$. 

Similarly, for $\rho^{\sup}\to 0$ (shrinking sets with decreasing probability, yet always containing the mean $\bs m$ of the given Gaussian distribution), one has that
\[
\frac{\VSRD}{\VMC}\leq\frac{p(F_\chi(\rho^{\sup})-p)}{p(1-p)}=\frac{F_\chi(\rho^{\sup})-p}{1-p}\to 0.
\]
Similar as above, we used that $F_\chi(\rho^{\sup})\to 0$ for $\rho^{\sup}\to 0$, and that $F_\chi(\rho^{\sup})\geq p$ also implies $p\to 0$. This means that the ratio between the variances of the SRD and the standard MC estimators goes to zero as $p\to 0$ (under the additional assumption that the mean of the Gaussian distribution is contained in the set, which---following the remarks below \eqref{slatercond}---was automatically satisfied in the first case because $p\geq 0.5$ may be assumed due to $p\to 1$).

We may interpret $\rho_{\delta} := \rho^{\sup}-\rho^{\inf}\geq 0$ as a measure of radial asymmetry of the set $\{\bs z\in\mathbb{R}^n: g(u,\bs z)\leq 0\}$. Clearly, for $\rho_{\delta} =0$, one has that $\rho (u,\bs v)$ is constant for all $\bs v\in\mathbb{S}^{n-1}$ so that the set becomes perfectly symmetric (i.e., a ball) around zero. 
\begin{lemma}\label{lem:varred2}
Under the assumptions of \cref{lem:varred1} and also assuming that $p\in (0,1)$, one has that $\VSRD\leq L^2 \rho_{\delta}^2$, where $L$ is the maximum of the density  $\mu_\chi =F'_\chi$ of the given $\chi$-distribution (which is bounded for any dimension).
\end{lemma}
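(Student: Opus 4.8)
The plan is to combine the Lipschitz continuity of the distribution function $F_\chi$ with an elementary variance bound for almost-surely bounded random variables. Set $Y := F_\chi(\rho(u,\bs\theta))$ with $\bs\theta\sim\mathcal{U}(\mathbb{S}^{n-1})$, so that $\VSRD=\operatorname{Var}(Y)$. The starting observation is that $\rho^{\inf}\le\rho(u,\bs v)\le\rho^{\sup}$ for every $\bs v$, and, since $F_\chi$ is nondecreasing, $Y$ takes values in the interval $[F_\chi(\rho^{\inf}),F_\chi(\rho^{\sup})]$ almost surely. The whole estimate then reduces to controlling the length of this interval and converting it into a variance bound.

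First I would quantify the interval length. Because $F_\chi'=\mu_\chi\le L$ (the $\chi$-density being bounded in every dimension, as asserted), $F_\chi$ is $L$-Lipschitz, and the mean value theorem gives $F_\chi(\rho^{\sup})-F_\chi(\rho^{\inf})\le L(\rho^{\sup}-\rho^{\inf})=L\rho_\delta$. I would then invoke the variational characterization $\operatorname{Var}(Y)=\min_{c\in\re}\mathbb{E}[(Y-c)^2]\le\mathbb{E}[(Y-c)^2]$, valid for any fixed $c$, and specialize to $c=F_\chi(\rho^{\inf})$. This choice forces $0\le Y-c\le L\rho_\delta$ pointwise, hence $(Y-c)^2\le L^2\rho_\delta^2$ almost surely, and taking expectations yields $\VSRD\le L^2\rho_\delta^2$ at once.

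Since the estimate is essentially routine, rather than a genuine obstacle the points deserving care are bookkeeping. When the convex sublevel set $\{\bs z\in\re^n:g(u,\bs z)\le 0\}$ is unbounded one has $\rho^{\sup}=\infty$, whence $\rho_\delta=\infty$ and the bound holds vacuously; the interesting regime is the bounded one, in which $\rho^{\sup}$ is finite. The hypothesis $p\in(0,1)$ merely excludes the degenerate endpoints $p\in\{0,1\}$, where $Y$ is almost surely constant and both sides vanish. Finally, I note that the weaker constant is deliberate: Popoviciu's inequality would sharpen $L^2\rho_\delta^2$ to $L^2\rho_\delta^2/4$, but the stated form already delivers the qualitative conclusion that $\VSRD\to 0$ as the feasible set becomes radially symmetric about the mean, i.e.\ as $\rho_\delta\to 0$.
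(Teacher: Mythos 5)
Your proof is correct and follows essentially the same route as the paper: both reduce the claim to the observation that $F_\chi(\rho(u,\bs\theta))$ is confined to the interval $[F_\chi(\rho^{\inf}),F_\chi(\rho^{\sup})]$, whose length is at most $L\rho_{\delta}$ by the boundedness of the density $\mu_\chi$. The only difference is in how the variance of an interval-valued random variable is bounded: the paper centers the squared deviation at the mean, identified as $F_\chi(\rho^*)$ for the $p$-quantile $\rho^*\in[\rho^{\inf},\rho^{\sup}]$ (which is precisely where the hypothesis $p\in(0,1)$ enters), whereas you center at the endpoint $F_\chi(\rho^{\inf})$ via $\operatorname{Var}(Y)=\min_{c}\mathbb{E}[(Y-c)^2]$, which renders that hypothesis unnecessary.
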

\begin{proof}
Thanks to $p\in (0,1)$, there exists a unique $p$-quantile $\rho^*$ of the one-dimensional Chi-distribution such that $F_\chi (\rho^*)=p$. One easily derives $\rho^*\in [\rho^{\inf},\rho^{\sup}]$ because otherwise a contradiction with $\mathbb{E}F_\chi (\rho (u,\bs \theta))=p$ would arise from the fact that the distribution function $F_\chi$ is strictly increasing. Now, exploiting this last property once more, we arrive at
\begin{align*}
\VSRD&=\operatorname{Var}(F_\chi (\rho(u,\bs \theta ))=\int\limits_{\bs v\in\mathbb{S}^{K-1}}[F_\chi (\rho(u,\bs v ))-\mathbb{E}F_\chi (\rho(u,\bs v ))]^2d\mu_U (\bs v)\\
&=\int\limits_{\bs v\in\mathbb{S}^{K-1}}[F_\chi (\rho(u,\bs v ))-F_\chi (\rho^*)]^2d\mu_U (\bs v)\leq (F_\chi (\rho^{\sup})-F_\chi (\rho^{\inf}))^2\\
&=(F'_\chi (\tilde{\rho})(\rho^{\sup}-\rho^{\inf}))^2\leq L^2\rho_{\delta}^2\qquad (\tilde{\rho}\in [\rho^{\inf},\rho^{\sup}]).
\end{align*}
\end{proof}

Note that \Cref{lem:varred2} implies that the SRD estimator has zero variance for balls centered at zero. Therefore, a single ray is sufficient to compute the exact probability of a ball.
This is of course well-known.
Now, consider a sequence of sets with the same probability $p$ converging to a centered ball. In this sequence, $\VMC=p(1-p)$ is the fixed variance of the MC estimator, while $\VSRD$ converges to zero. Thus, the variance ratio between the estimators reduces in favor of the SRD as shown in \Cref{lem:varred1}.

Finally, we revisit \Cref{varredcounterex}. For the half-space
considered there, $\rho^{\inf}=0$ and $\rho^{\sup}=\infty$. Hence, the estimate from \cref{lem:varred1} only yields the already known relation $\VSRD\leq p(1-p)=\VMC$. Moreover, $\rho_{\delta}=\infty$, whence \Cref{lem:varred2} does not provide additional information either. The reason is that this half space is neither a large nor a small space (its probability is equal to $1/2$, which is far from $1$ and $0$) nor a symmetric set. This explains why in this situation the variance of the SRD MC estimator is not reduced compared to the classical MC estimator.

\subsection{Differentiability and computation of gradients of the probability function}\label{sec:spherical-radial}
When numerically addressing a chance constraint such as \eqref{eq:chance}, compactly written as $\varphi (u)\geq p$ with $\varphi$ defined in \eqref{probfunc}, it is essential to compute not only the value of $\varphi(u)$, but also its derivatives. Analytical properties of $\varphi$, such as Lipschitz continuity or differentiability, have been studied before \cite{VanAckooijHenrion14,VanAckooijHenrion17,HantouteHenrionPerez19,vanAckooij2024}.
Given our focus on numerical methods, we concentrate on the discretized probability function $\tilde{\varphi}$ defined in \eqref{srdestimator}, which uses the samples $\{\bs v_i\}_{i=1}^N\subseteq\mathbb{S}^{n-1}$. Specifically, we aim at identifying conditions guaranteeing its differentiability and derive an explicit form of the derivative suitable for implementation. Since the Chi-distribution function $F_\chi$ is continuously differentiable, with its derivative being the density $\mu_\chi$, the problem reduces to verifying the partial differentiability of the function $\rho(u,\cdot)$ with respect to $u$. We now fix some $\bar{u}\in X$ and assume that the random constraint function $g$ from \cref{genset} has the typical form relevant for joint chance constraints, namely
\begin{equation}\label{gmaxdef}
g:=\max\limits_{j=1,\ldots ,p}g_j,
\end{equation}
where the $g_j:U\times\mathbb{R}^n\to\mathbb{R}$ are affine. In particular, $g(u,\cdot)$ is convex for all $u\in U$ as required in \cref{genset}. Analogously to \eqref{rhodef}, we define %
\begin{equation}\label{rhoidef}
\rho_j \left( u,\bs v\right) :=\sup \left\{ r\geq 0:g_j\left(u,\bs m+rL\bs v\right)
\leq 0\right\}\qquad (j=1,\ldots ,p).     
\end{equation}
Under the additional assumption
\begin{equation}\label{slateri}
 g_j(\bar{u},\bs m)<0\qquad (j=1,\ldots ,p)  
\end{equation}
it follows that \eqref{slatercond} holds. Moreover,
\begin{equation}\label{rhomax}
\rho \left(\bar{u},\bs v\right)=\min\limits_{j=1,\ldots ,p}\rho_j \left( \bar{u},\bs v\right).   
\end{equation}
In the following we denote by $f_\chi$ the density of the one-dimensional Chi-distribution $\mu_\chi$ with $n$ degrees of freedom and we adopt the convention that 
$f_\chi (\infty)=\lim\limits_{t\to\infty}f_\chi (t)=0$.
\begin{proposition}\label{derivformula}
Let $U$ be a Banach space. In the setting above and under assumption \eqref{slateri}, if for all $i\in\{1,\ldots ,N\}$ satisfying $\rho (\bar{u},\bs v_i)<\infty$ holds
\begin{equation}\label{cq}
\#\{j\in\{1,\ldots , p\}:\rho \left(\bar{u},\bs v_i\right)=\rho_j \left( \bar{u},\bs v_i\right) \}=1,
\end{equation}
then $\tilde{\varphi}$ is continuously differentiable at $\bar{u}$ with derivative
\begin{equation}\label{gradform}
\nabla\tilde{\varphi}(\bar{u})=-N^{-1}\sum\limits_{i=1}^N\frac{f_\chi (\rho_{j_i} (\bar{u},\bs v_i))}{\langle \nabla_zg_{j_i}(\bar{u},\bs m+\rho_{j_i}(\bar{u},\bs v_i)L\bs v_i),L\bs v_i\rangle}\nabla_ug_{j_i}(\bar{u},\bs m+\rho_{j_i}(\bar{u},\bs v_i)L\bs v_i).
\end{equation}
Here, for $i\in\{1,\ldots ,N\}$, the index $j_i$ is one satisfying \eqref{cq}, which is unique if $\rho (\bar{u},\bs v_i)<\infty$. 
\end{proposition}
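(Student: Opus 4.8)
The plan is to differentiate the finite sum \eqref{srdestimator} term by term. Since $F_\chi$ is $C^1$ with $F_\chi'=f_\chi$, the chain rule reduces the claim to understanding, for each fixed sample direction $\bs v_i$, the map $u\mapsto \rho(u,\bs v_i)$ near $\bar u$ and identifying its derivative (or showing $F_\chi\circ\rho$ is locally constant when $\rho=\infty$). The first step I would carry out is to exploit the affineness of each $g_j$: along the ray one has $g_j(u,\bs m+rL\bs v)=g_j(u,\bs m)+r\langle\nabla_z g_j,L\bs v\rangle$, affine in $r$, with $\nabla_z g_j$ and $\nabla_u g_j$ constant. Combined with \eqref{slateri}—which, by continuity of the affine map $u\mapsto g_j(u,\bs m)$, persists on a neighborhood $\mathcal U$ of $\bar u$, so that $g_j(u,\bs m)<0$ there—this yields the closed form
\[
\rho_j(u,\bs v)=
\begin{cases}
+\infty, & \langle \nabla_z g_j, L\bs v\rangle \le 0,\\
-g_j(u,\bs m)/\langle \nabla_z g_j, L\bs v\rangle, & \langle \nabla_z g_j, L\bs v\rangle > 0.
\end{cases}
\]
The crucial structural observation is that whether $\rho_j(u,\bs v_i)$ is finite or infinite is dictated solely by the sign of $\langle\nabla_z g_j,L\bs v_i\rangle$, which does not depend on $u$.

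For a sample with $\rho(\bar u,\bs v_i)<\infty$, let $j_i$ be the unique minimizer from \eqref{cq}, so that $\rho(\bar u,\bs v_i)=\rho_{j_i}(\bar u,\bs v_i)<\infty$ and hence $\langle\nabla_z g_{j_i},L\bs v_i\rangle>0$; in particular the denominator in \eqref{gradform} is strictly positive and the summand is well defined. I would then argue that the active index is locally constant on a (possibly smaller) neighborhood: the set of indices with finite $\rho_j(\cdot,\bs v_i)$ is $u$-independent by the observation above, each such $\rho_j(\cdot,\bs v_i)$ is affine hence continuous in $u$, and the strict inequalities $\rho_{j_i}(\bar u,\bs v_i)<\rho_{j}(\bar u,\bs v_i)$ for $j\neq j_i$ are open conditions. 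Consequently $\rho(u,\bs v_i)=\rho_{j_i}(u,\bs v_i)$ near $\bar u$, and differentiating the defining relation $g_{j_i}(u,\bs m+\rho_{j_i}L\bs v_i)=0$ (equivalently, the explicit quotient) gives $\nabla_u\rho_{j_i}=-\nabla_u g_{j_i}/\langle\nabla_z g_{j_i},L\bs v_i\rangle$. The chain rule then produces exactly the $i$-th summand of \eqref{gradform}; since $g_{j_i}$ is affine, its gradients are constant and may be evaluated at the boundary point $\bs m+\rho_{j_i}(\bar u,\bs v_i)L\bs v_i$ as written.

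For a sample with $\rho(\bar u,\bs v_i)=\infty$, all $\rho_j(\bar u,\bs v_i)=\infty$, so $\langle\nabla_z g_j,L\bs v_i\rangle\le 0$ for every $j$; by the $u$-independence of these signs and $g_j(u,\bs m)<0$ on $\mathcal U$, we get $\rho(u,\bs v_i)=\infty$ and thus $F_\chi(\rho(u,\bs v_i))=F_\chi(\infty)=1$ throughout $\mathcal U$. Hence this term is locally constant with zero derivative, consistent with the convention $f_\chi(\infty)=0$, which annihilates the corresponding summand in \eqref{gradform}. Assembling the two cases gives differentiability of $\tilde\varphi$ at every point of $\mathcal U$ with the stated formula. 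To upgrade to continuous differentiability I would note that on $\mathcal U$ each finite-$\rho$ summand equals $-f_\chi(\rho_{j_i}(u,\bs v_i))\nabla_u g_{j_i}/\langle\nabla_z g_{j_i},L\bs v_i\rangle$ with $j_i$ fixed, which is continuous in $u$ because $\rho_{j_i}(\cdot,\bs v_i)$ is affine and $f_\chi$ is continuous, while the infinite-$\rho$ summands vanish; a finite sum of continuous maps is continuous, so $\nabla\tilde\varphi$ is continuous near $\bar u$.

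I expect the main obstacle to be the nonsmoothness introduced by the $\min$/$\max$ structure: $\rho(\cdot,\bs v_i)=\min_j\rho_j(\cdot,\bs v_i)$ is in general only directionally differentiable at points where the minimum is attained by several indices, and the constraint qualification \eqref{cq} is precisely what excludes such kinks. The care needed is to verify that the active index stays unique (and the finite/infinite classification stable) on a \emph{full} neighborhood rather than merely at $\bar u$; this is exactly what the $u$-independence of the signs $\langle\nabla_z g_j,L\bs v_i\rangle$ and the openness of the strict-minimum condition deliver, and it is also what guarantees continuity—not just existence—of the derivative. A secondary bookkeeping point is the consistent treatment of the value $\rho=\infty$ together with the conventions $F_\chi(\infty)=1$ and $f_\chi(\infty)=0$.
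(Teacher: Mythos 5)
Your proof is correct, and it reaches the same decomposition of the argument as the paper (treat each $\rho_j$ separately, use \eqref{cq} to identify the unique active index locally, handle the $\rho=\infty$ samples via the conventions $F_\chi(\infty)=1$, $f_\chi(\infty)=0$), but it executes the key steps by a genuinely more elementary route. The paper establishes the continuous differentiability of $F_\chi(\rho_j(\cdot,\bs v))$ and the formula \eqref{single} by invoking \cite[Cor.~3.2]{vanAckooij2024} (after checking a growth condition), identifies $\rho(\cdot,\bs v_i)=\rho_{j_i}(\cdot,\bs v_i)$ locally via \cite[Lem.~1]{HantouteHenrionPerez19}, and handles the infinite case through the vanishing-derivative result \cite[Lem.~3.3]{vanAckooij2024}; these tools are designed for general (non-affine, convex) $g_j$ satisfying growth conditions. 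You instead exploit affineness to write $\rho_j(u,\bs v)=-g_j(u,\bs m)/\langle\nabla_z g_j,L\bs v\rangle$ in closed form, which makes everything explicit: the finite/infinite classification depends only on the $u$-independent sign of $\langle\nabla_z g_j,L\bs v_i\rangle$, so the $\rho=\infty$ summands are \emph{locally constant} (a slightly stronger and cleaner conclusion than merely having zero derivative), the local uniqueness of the active index follows from openness of strict inequalities among finitely many continuous functions, and the derivative formula is a direct quotient-rule/chain-rule computation. What you lose is generality---your closed form is tied to the affine structure, whereas the paper's citations would carry over to nonlinear constraint functions---but for the proposition as stated your self-contained argument is fully adequate and arguably more transparent.
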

\begin{proof}
Our setting allows us to invoke \cite[Cor.\ 3.2]{vanAckooij2024} for each $j\in\{1,\ldots ,p\}$ to argue that the functions $F_\chi (\rho_j (u,\bs v))$ are continuously differentiable at all $(u,\bs v)\in\mathcal{V}_j(\bar{u})\times\mathbb{S}^{n-1}$, where $\mathcal{V}_j(\bar{u})$ is some neighborhood of $\bar{u}$. Moreover, for all $\bs v\in\mathbb{S}^{n-1}$, one has that 
\begin{equation}\label{single}
\nabla [F_\chi (\rho_j (\cdot,\bs v))](\bar{u})=\frac{-f_\chi (\rho_{j} (\bar{u},\bs v))}{\langle \nabla_z g_j(\bar{u},\bs m+\rho_{j}(\bar{u},\bs v)L\bs v),L\bs v\rangle}\nabla_ug_j(\bar{u},\bs m+\rho_{j}(\bar{u},\bs v)L\bs v).
\end{equation}
Actually, the application of \cite[Cor.~3.2]{vanAckooij2024} requires the verification of a certain growth condition. However, since $g_j$ is affine, this condition is easily verified along the lines of \cite[Lem.~6]{geiersbach2023optimality}. 
To prove \eqref{cq}, consider first an index $i\in\{1,\ldots ,N\}$ with $\rho (\bar{u},\bs v_i)<\infty$. It follows from \cite[Lem.~1]{HantouteHenrionPerez19} and \eqref{cq} that $\rho (\cdot\bs ,\bs v_i)=\rho_{j_i} (\cdot,\bs v_i)$ locally around $\bar{u}$.
Therefore, by \eqref{single}
\begin{equation}\label{single2}
\nabla [F_\chi (\rho (\cdot,\bs v_i))](\bar{u})=\frac{-f_\chi (\rho_{j_i} (\bar{u},\bs v_i))}{\langle \nabla_z g_{j_i}(\bar{u},\bs m+\rho_{j_i}(\bar{u},\bs v_i)L\bs v_i),L\bs v_i\rangle}\nabla_ug_{j_i}(\bar{u},\bs m+\rho_{j_i}(\bar{u},\bs v_i)L\bs v_i),
\end{equation}
which yields \eqref{gradform}. Now, consider an index $i$ where $\rho (\bar{u},\bs v_i)=\infty$, whence $\rho_j (\bar{u},\bs v_i)=\infty$ for all $j=1,\ldots, p$. Consequently, $F_\chi (\rho (\bar{u},\bs v_i))=F_\chi (\rho_j (\bar{u},\bs v_i))$ for all $j\in\{1,\ldots , p\}$.
By the remark above, all $F_\chi (\rho_j (\cdot ,\bs v_i))$ are continuously differentiable at $\bar{u}$ with vanishing derivative \cite[Lem.~3.3]{vanAckooij2024}. This shows that $F_\chi (\rho (\cdot,\bs v_i))$ itself is continuously differentiable with zero derivative at $\bar{u}$ and, hence, \eqref{single2} also holds in this second case thanks to $f_\chi (\infty)=0$. Finally, \eqref{gradform} follows from \eqref{single2} and the definition of $\tilde{\varphi}$ in \eqref{srdestimator}.
\end{proof}
The next proposition shows that the difficult-to-verify condition \eqref{cq}, required for differentiability of $\tilde\varphi$, is generically satisfied under a common constraint qualification that
is weaker than the prominent \textit{Linear Independence Constraint Qualification} (LICQ):
\begin{proposition}\label{prop:r2cq}
If the following so-called 'Rank-2-Constraint Qualification'
\begin{eqnarray}
&\forall \bs z\in\mathbb{R}^n, i,j\in\{1,\ldots ,p\}, i\neq j:&\notag\\
&0=g(\bar{u},\bs z)=g_i(\bar{u},\bs z)=g_j(\bar{u},\bs z)\Longrightarrow\textrm{ rank }\{\nabla_{\bs z} g_i(\bar{u},\bs z),\nabla_{\bs z}g_j(\bar{u},\bs z)\}=2 &\label{r2cq}
\end{eqnarray}
holds at $\bar{u}\in U$, then a random sample 
$\{\bs v_i\}_{i=1}^N\subseteq\mathbb{S}^{n-1}$ satisfies \eqref{cq} with probability one.
\end{proposition}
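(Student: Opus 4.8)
The plan is to prove the almost-sure statement by exhibiting a single $\mu_U$-null subset $B\subseteq\mathbb{S}^{n-1}$ of ``bad'' directions such that \eqref{cq} fails at $\bar u$ exactly when one of the finitely many samples $\bs v_i$ lands in $B$. Since the samples are uniformly distributed, $\mathbb{P}(\bs v_i\in B)=\mu_U(B)=0$ for each $i$, and subadditivity over $i=1,\ldots,N$ then yields \eqref{cq} with probability one. Writing the affine constraints as $g_j(\bar u,\bs z)=\langle \bs a_j,\bs z\rangle+b_j$ with gradient $\bs a_j:=\nabla_{\bs z}g_j(\bar u,\bs z)$ constant in $\bs z$, and setting $\alpha_j:=g_j(\bar u,\bs m)$, which is strictly negative by \eqref{slateri}, I would define
\[
B:=\{\bs v\in\mathbb{S}^{n-1}:\rho(\bar u,\bs v)<\infty\text{ and }\#\{j:\rho_j(\bar u,\bs v)=\rho(\bar u,\bs v)\}\ge 2\},
\]
so that, recalling that the minimum in \eqref{rhomax} is attained (hence the active count is always at least one when $\rho<\infty$), failure of \eqref{cq} for a sample is equivalent to membership in $B$. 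The whole task thus reduces to showing $\mu_U(B)=0$.

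Next I would split $B$ over index pairs. If $\bs v\in B$, the exit point $\bs z^\ast:=\bs m+\rho(\bar u,\bs v)L\bs v$ satisfies $g(\bar u,\bs z^\ast)=0$ and there exist $i\neq j$ with $g_i(\bar u,\bs z^\ast)=g_j(\bar u,\bs z^\ast)=g(\bar u,\bs z^\ast)=0$. This is precisely the hypothesis of \eqref{r2cq} at $\bs z=\bs z^\ast$, so $\bs a_i,\bs a_j$ are linearly independent for every pair that can actually occur; pairs violating the premise never become simultaneously active and hence contribute nothing. Thus $B\subseteq\bigcup_{i\neq j}B_{ij}$, where $B_{ij}$ gathers the directions whose ray $\{\bs m+rL\bs v:r\ge 0\}$ meets the set $\{\bs z:g_i(\bar u,\bs z)=g_j(\bar u,\bs z)=0\}$, which has codimension two thanks to the established independence.

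The decisive computation is the following. Inserting $\bs z=\bs m+rL\bs v$ into $g_i=g_j=0$ produces the linear system $\alpha_i+r\langle L^T\bs a_i,\bs v\rangle=0$ and $\alpha_j+r\langle L^T\bs a_j,\bs v\rangle=0$ in the single unknown $r$; its consistency forces the resulting $2\times2$ determinant to vanish, i.e.\ $\langle \bs c_{ij},\bs v\rangle=0$ with $\bs c_{ij}:=L^T(\alpha_j\bs a_i-\alpha_i\bs a_j)$. Hence $B_{ij}\subseteq\{\bs v\in\mathbb{S}^{n-1}:\langle \bs c_{ij},\bs v\rangle=0\}$. For a nondegenerate Gaussian, $L$ is invertible, so $\bs c_{ij}=0$ would entail $\alpha_j\bs a_i=\alpha_i\bs a_j$ and, since $\alpha_i,\alpha_j\neq0$, the proportionality $\bs a_i\parallel\bs a_j$, contradicting the independence from \eqref{r2cq}. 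Therefore $\bs c_{ij}\neq0$, so $\{\langle \bs c_{ij},\bs v\rangle=0\}$ is a great subsphere $\mathbb{S}^{n-2}\subset\mathbb{S}^{n-1}$ and thus $\mu_U$-null; a finite union of null sets gives $\mu_U(B)=0$, completing the argument.

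I expect the only real obstacle to be the careful use of \eqref{r2cq}: one must notice that affineness makes each $\bs a_j$ independent of $\bs z$, so the rank-two condition is really a statement about the index pair, and that the exit point $\bs z^\ast$ furnishes exactly the common-zero witness the premise of \eqref{r2cq} demands. A minor additional point is the invertibility of $L$ (the $k=n$ nondegenerate case), needed to pull $\bs c_{ij}\neq0$ back through $L^T$; in a degenerate setting one would instead work on $\mathbb{S}^{k-1}$ within the range of $L$.
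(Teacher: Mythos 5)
Your argument is correct, and it reaches the conclusion by a genuinely different, self-contained route. The paper's proof is essentially a one-line citation: it invokes \cite[Lem.~4.3]{VanAckooijHenrion17}, which asserts under \eqref{r2cq} that the set of directions $\bs v$ with $\rho(\bar u,\bs v)<\infty$ and a non-unique active index is $\mu_U$-null, and then concludes exactly as you do by intersecting $N$ probability-one events. You instead prove that null-set statement from scratch by exploiting affineness: the simultaneous activity of a pair $(i,j)$ along the ray forces the vanishing of the $2\times 2$ determinant $\langle L^{T}(\alpha_j\bs a_i-\alpha_i\bs a_j),\bs v\rangle$, and \eqref{r2cq} together with $\alpha_i,\alpha_j<0$ from \eqref{slateri} and the invertibility of $L$ guarantees this linear form is nonzero, so each $B_{ij}$ sits inside a great subsphere of measure zero. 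Your reduction steps are all sound: the equivalence between ``$\rho_j=\rho<\infty$'' and ``$g_j=0$ at the exit point'' holds because each $g_j$ is affine and strictly negative at $\bs m$, and restricting to pairs that actually occur correctly discharges the hypothesis of \eqref{r2cq}. What your approach buys is transparency and independence from the external lemma, at the cost of being tied to the affine structure (the cited lemma covers more general differentiable constraint functions via a transversality-type argument); your closing caveat about working on $\mathbb{S}^{k-1}$ when $L$ is not invertible is apt but moot here, since the paper's Gaussian is nondegenerate with $k=n$.
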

\begin{proof}
Following \cite[Lem.~4.3]{VanAckooijHenrion17}, \eqref{r2cq} implies that
\[
\mu_U(\{\bs v_i\in\mathbb{S}^{n-1}:\rho (\bar{u},\bs v)<\infty,\,\#\{j\in\{1,\ldots , p\}:\rho \left(\bar{u},\bs v_i\right)=\rho_j \left( \bar{u},\bs v_i\right) \}=1\})=1.
\]
This shows that \eqref{cq} is satisfied with probability one for each $i=1,\ldots ,N$ separately, hence it is satisfied with probability one simultaneously for all $i=1,\ldots , N$. 
\end{proof}

\section{Application of spherical-radial decomposition to the control problem}\label{sec:application}
\subsection{Discretization of joint state constraints}
To replace the continuous state constraint with a finite number of
inequalities, we use a continuous linear map $E:Y\to \mathbb R^M$. If
$Y$ continuously embeds into the space of continuous function, then
the map $E$ can be chosen as the evaluation of the state
$y(\omega)$ at the points $\{ \bs x_{1},\ldots ,\bs x_M\} \subseteq
\mathcal{D}$. If the dimension of $\D$ is $d=1$, $Y\subset
H^1(\D)\hookrightarrow \mathcal C^0(\D)$ and thus point evaluation is
continuous.  If $d=2,3$ and the data is smooth, then additional
regularity arguments for elliptic operators can be used to
show that $Y\subset H^2(\D)$, which again continuously embeds into the
space of continuous functions, and thus choosing $E$ as point
evaluation is feasible. If $d>3$ or no additional regularity can be
used, $E$ can be chosen as local averages of the state variable
$y(\omega)$ centered at the points $\{ \bs x_{1},\ldots ,\bs x_M\}$.  For ease
of notation, in the following we write $E$ as point evaluation, i.e.,
$E(y):= (y(\bs x_1),\ldots,y(\bs x_M))$. The following derivations also apply to $E$ being a local averaging operator.  Note that here we
consider the points $\bs x_j$ as given, e.g., as a uniform grid of
points covering the domain $\D$. It is also possible to choose the set of points $\{ \bs x_{1},\ldots ,\bs x_M\}$ adaptively using probabilistic information, thus keeping the number of points $M$ moderate, 
\cite{BertholdHeitschHenrionEtAl21}. Now, with no matter which choice of a discrete subset $\{ \bs x_{1},\ldots ,\bs x_M\}$ of the domain, we end up at the accordingly modified chance constraint \eqref{eq:chance}:
\begin{equation}\label{eq:chancedisc}
  \mathbb P(\omega\in \Omega \,| \, \underline y( \bs x_j) \le y( \bs x_j, \omega)\le \bar y(\bs x_j)
  \quad\forall j=1,\ldots ,M) \ge p,
\end{equation}
To write the probability in this chance constraint 
for the concrete governing equation \eqref{eq:redstate} in the form $\mathbb{P}(\max_{1\le j\le M} g_j(u,\bs z ))$,
we define the functions $g_j$ %
as 
\begin{equation}\label{gidef}
\begin{array}{rcl}
g_j(u,\bs z)&:=&\left[A(u)^{-1}\left(f- Cu-B\xi_0 - \sum_{k=1}^Kz_k
    Be_{k}\right) - \bar{y}\right](\bs x_j)\\[1.3ex]
g_{M+j}(u,\bs z)&:=&\left[\underline{y}-A(u)^{-1}\left(f- Cu-B\xi_0 - \sum_{k=1}^Kz_kBe_{k}\right) \right](\bs x_{j}),
\end{array}
\end{equation}
where we assume the form \eqref{afflinrand} for the random process $\xi$, with the concrete distribution $\bs \zeta\sim\mathcal{N}(0,\Sigma)$. In particular, $\bs z\in\mathbb{R}^K$ represents the realizations $\bs \zeta (\omega)$ of $\bs \zeta$. This fits our abstract setting from \cref{sec:spherraddec} with the choices $p:=2M, n:=K$. Clearly, the $g_j$ are affine in $\bs z$ for $j=1,\ldots,2M$ as required in \cref{sec:spherical-radial}.
\subsection{Implicit dimension reduction in probability space}\label{subsec:KL}
We now turn to the approximation in random space. A standard method to efficiently describe a Gaussian random variable or field is its Karhunen-Loeve (KL) expansion. For instance, for a random field $\xi\sim\mathcal N(\xi_0,\mathcal C_0)$ over the physical domain $\D$ such a (possibly infinite) expansion is 
\begin{equation}\label{eq:kl-xi}
    \xi \left(\omega \right) = \xi_0 + \sum_k %
    \zeta_k^\xi(\omega) e_{k}^\xi,
\end{equation}
where $e_k^\xi\in L^2(\D)$ are eigenvectors (or functions) of $\mathcal C_0$, and $\zeta_k^\xi(\omega)$ are independent Gaussian random variables with variances $\lambda_k^\xi$, where $\lambda_1^\xi\ge \lambda_2^\xi\ge \ldots \ge 0$ are the eigenvalues corresponding to $e_k^\xi$.  The rate at which the sequence $\{\lambda_k^\xi\}_{k}$ decays controls the approximation error when the KL expansion \eqref{eq:kl-xi} is truncated. 

Although a truncated version of \eqref{eq:kl-xi} is an efficient (in terms of the number of terms and thus the dimension) representation for the random variable $\xi(\omega)$, it may not be efficient for the state random variable $y(\omega)$, which is subject to the joint chance constraints. To find a representation tailored to the state, note that \eqref{eq:controltostate} implies that
\begin{equation}\label{eq:kl-eta}
y(\omega) =  %
y_{0}^u - A(u)^{-1}B\xi(\omega) =:y_{0}^u - \sum_{k} 
\zeta_{k}^y(\omega)  e_{k}^y, %
\end{equation}
where $y_{0}^u := A(u)^{-1}(f-B\xi_0-Cu)$ is the mean of the state variable. For fixed $u$, $e_k^y$ are the eigenfunctions of 
$A(u)^{-1}B\mathcal C_0 B^\star A(u)^{-\star}$, with the ``$-\star$'' denotes the inverse adjoint operator. Moreover, in this KL-type expansion, $\zeta_k^y(\omega)$ are independent Gaussian random variables with variance $\lambda_k^y$, where $\lambda_1^y\ge \lambda_2^y\ge \ldots \ge 0$ are the eigenvalues corresponding to $e_k^y$. 

Due to smoothing properties of the inverse elliptic operator
$A(u)^{-1}$, the variances $\lambda_k^y$ of the random variables in \eqref{eq:kl-eta} typically decay faster than the variances $\lambda_k^\xi$ for \eqref{eq:kl-xi}. Thus, using \eqref{eq:kl-eta} instead of \eqref{eq:kl-xi} allows a more efficient finite-dimensional approximation of the random variable. To illustrate this, we show in the numerical example in \cref{sec:example-linear} that $K=20$ coefficients allow a highly accurate representation of $y(\omega)$, while an accurate representation of $\xi(\omega)$ would require a substantially larger number of expansion coefficients; see \cref{fig:C0}.

Note that in \eqref{eq:kl-xi}, $\lambda_k^\xi$ and $e_k^\xi$ depend only on the distribution of the random variable but not on the control $u$. When the PDE operator $A$ is independent of $u$, only the mean $y_{0}^u$ in \eqref{eq:kl-eta} depends on the control, whereas $\lambda_k^y$ and $e_k^y$ are independent of $u$. For both scenarios, eigenfunctions and eigenvalues can be computed upfront and used throughout a numerical optimization algorithm to find the optimal control.
However, if $A(u)$ is a function of $u$, then $\lambda_k^y$ and $e_k^y$ are also functions of $u$ and need to be recomputed whenever the control changes (such as in a numerical optimization algorithm). Thus, while \eqref{eq:kl-eta} provides an efficient way to represent the random state variable, in situations where the PDE operator depends on $u$, utilizing \eqref{eq:kl-eta} may not be computationally efficient to evaluate the probability $\varphi(u)$ (or its approximation \eqref{srdestimator}), and its derivative with respect to $u$.

\subsection{Probability function and its gradient for the concrete control problem}
We start by observing that the condition \eqref{slateri} at some $u\in U$ translates in our concrete control problem via \eqref{gidef} and by virtue of $\bs m=0$ as
\begin{equation}\label{slatericonc}
\underline{y}(\bs x_j)<y_{0}^u(\bs x_j)<\bar{y}(\bs x_j)\quad (j=1,\ldots ,M),
\end{equation}
where $y_{0}^u:=A(u)^{-1}(f-Cu-B\xi_0)$ refers to the state associated with the control $u$ and the mean field $\xi_0$ of the random source term $\xi$ according to \eqref{afflinrand}. This being satisfied, we may represent the sample-based approximation of the probability function associated with our concrete control problem as \eqref{srdestimator} with $\rho(\cdot,\cdot)$ defined in \eqref{rhodef}. In order to make formula \eqref{srdestimator} explicit, we have to represent $\rho$  in terms of the original data of our control problem.
Using \eqref{gidef}, the functions $\rho_j$ from \eqref{rhoidef} calculate, for $j=1,\ldots ,M$ and $\bs v\in\mathbb{S}^{K-1}$, as
\begin{eqnarray}\label{eq:rho_j}
\rho_j \left( u,\bs v\right)&=&\left\{
\begin{array}{cc}
\frac{y_{0}^u(\bs x_j)-\bar{y}(\bs x_j)}{\delta (u,\bs v)(\bs x_j)}     & \mbox{if }\delta (u,\bs v)(\bs x_j)< 0 \\
\infty &\mbox{if }\delta (u,\bs v)(\bs x_j)\geq 0
\end{array}
\right.\\
\label{eq:rho_Mj}
\rho_{M+j} \left( u,\bs v\right)&=&\left\{
\begin{array}{cc}
\frac{y_{0}^u(\bs x_j)-\underline{y}(\bs x_j)}{\delta (u,\bs v)(\bs x_j)}     & \mbox{if }\delta (u,\bs v)(\bs x_j)> 0 \\
\infty &\mbox{if }\delta (u,\bs v)(\bs x_j)\leq 0
\end{array}
\right.
\end{eqnarray}
Here, $\delta (u,\bs v):=A(u)^{-1}B\sum_{k=1}^K(L\bs v)_ke_k$. Now, the approximating probability function $\tilde{\varphi}(u)$ in \eqref{srdestimator} is immediately calculated from the above formulas using the representation \eqref{rhomax}.
Regarding the derivative of $\tilde{\varphi}$, we can calculate the partial derivatives of $g_j$ with respect to $u$ and $\bs z$ as in
\eqref{gidef}. Doing so, we obtain for all $h\in U$, $j=1,\ldots, p$, and $k=1,\ldots ,K$:
\begin{eqnarray}
\langle\nabla_ug_j(u,\bs z),h \rangle &=&-\langle\nabla_ug_{M+j}(u,\bs z),h \rangle \nonumber \\
&=&\left[\left(\left(A(u)^{-1}\right)'h\right)\left(f- Cu-B\xi_0 - \sum_{k=1}^Kz_kBe_{k}\right)-A(u)^{-1}Ch\right](\bs x_j) \label{eq:nabla_u}\\
\frac{\partial g_j}{\partial z_k}(u,\bs z)&=&-\frac{\partial g_{M+j}}{\partial z_k}(u,\bs z)=-\left[A(u)^{-1}Be_k\right](\bs x_j) \label{eq:nabla_z}
\end{eqnarray}
Now, under assumption \eqref{cq}, we obtain that $\tilde{\varphi}$ is continuously differentiable at $\bar{u}$ with the derivative in \eqref{gradform}, which can be made explicit using \eqref{eq:nabla_u},\eqref{eq:nabla_z}. 

Finally, we make the assumption \eqref{cq} more explicit in light of the rank-2 constraint qualification in \cref{prop:r2cq}. For that purpose, for a fixed control $\bar{u}$, we denote by $y^{(k)}$ the solutions of
\[
A(\bar{u})y+Be_k=0\quad (k=1,\ldots ,K).
\]
\begin{proposition}
If for any $i,j\in\{1,\ldots ,M\}$ with $i\neq j$, the two vectors
\[
\left(\begin{array}{c}y^{(1)}(\bs x_i)\\ \vdots\\y^{(K)}(\bs x_i)\end{array}\right),\left(\begin{array}{c}y^{(1)}(\bs x_j)\\ \vdots\\y^{(K)}(\bs x_j)\end{array}\right)
\]
are linearly independent, then the Rank-2-Constraint Qualification from \cref{prop:r2cq} is satisfied at $\bar{u}$ and, consequently, for a random sample $\{\bs v_i\}_{i=1}^N\subseteq\mathbb{S}^{n-1}$, the probability function $\tilde{\varphi}$ from \eqref{srdestimator} is continuously differentiable at $\bar{u}$ with probability one.
\end{proposition}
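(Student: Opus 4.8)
The plan is to reduce the claim to the Rank-2-Constraint Qualification \eqref{r2cq} and then invoke \cref{prop:r2cq}. The first step is to make the constraint gradients $\nabla_{\bs z}g_\ell(\bar u,\cdot)$ explicit. Since each $g_\ell$ from \eqref{gidef} is affine in $\bs z$, these gradients are constant in $\bs z$, and by \eqref{eq:nabla_z} we have $\partial g_j/\partial z_k(\bar u,\bs z) = -[A(\bar u)^{-1}Be_k](\bs x_j)$. By definition $y^{(k)}$ solves $A(\bar u)y + Be_k = 0$, i.e.\ $y^{(k)} = -A(\bar u)^{-1}Be_k$, so that $[A(\bar u)^{-1}Be_k](\bs x_j) = -y^{(k)}(\bs x_j)$. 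I would therefore obtain, for $j=1,\ldots,M$, that $\nabla_{\bs z}g_j(\bar u,\bs z) = \bs w_j$ and $\nabla_{\bs z}g_{M+j}(\bar u,\bs z) = -\bs w_j$, where $\bs w_j := (y^{(1)}(\bs x_j),\ldots,y^{(K)}(\bs x_j))^\top$ is precisely the vector appearing in the hypothesis.

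With the gradients identified, I would verify \eqref{r2cq} by a case distinction. Fix $\bs z$ and two distinct indices $\ell\neq\ell'$ in $\{1,\ldots,2M\}$ that are simultaneously active, i.e.\ $0 = g(\bar u,\bs z) = g_\ell(\bar u,\bs z) = g_{\ell'}(\bar u,\bs z)$. Each of $\ell,\ell'$ is associated with a physical point, say $\bs x_i$ and $\bs x_j$ respectively, and the corresponding gradient equals $\pm\bs w_i$ (resp.\ $\pm\bs w_j$). If $i\neq j$, the hypothesis gives that $\bs w_i$ and $\bs w_j$ are linearly independent; since scaling by $\pm1$ does not affect linear independence, the rank of $\{\nabla_{\bs z}g_\ell,\nabla_{\bs z}g_{\ell'}\}$ equals $2$, as required.

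The one remaining case, which is the crux of the argument, is $i=j$: two distinct active indices pointing to the same physical location. Then necessarily $\{\ell,\ell'\}=\{i,M+i\}$, so the two gradients are $\bs w_i$ and $-\bs w_i$, which are antiparallel and would violate \eqref{r2cq}. However, joint activity $g_i(\bar u,\bs z) = g_{M+i}(\bar u,\bs z) = 0$ forces, via \eqref{gidef}, the state value at $\bs x_i$ to equal both $\bar y(\bs x_i)$ and $\underline y(\bs x_i)$, hence $\bar y(\bs x_i) = \underline y(\bs x_i)$. This contradicts the strict ordering $\underline y(\bs x_i) < \bar y(\bs x_i)$, which is guaranteed by the Slater condition \eqref{slatericonc}. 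Thus this case cannot occur, and \eqref{r2cq} holds vacuously for such pairs.

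Combining the cases shows that the Rank-2-Constraint Qualification is satisfied at $\bar u$, and the conclusion---continuous differentiability of $\tilde\varphi$ at $\bar u$ with probability one---then follows immediately from \cref{prop:r2cq}. I expect the main obstacle to be precisely the antiparallel-gradient case $\{\ell,\ell'\}=\{i,M+i\}$: it is the only situation in which the displayed linear-independence hypothesis does not directly apply, and it must instead be excluded by observing that an upper and a lower bound at the same point cannot be simultaneously active.
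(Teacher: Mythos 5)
Your proof is correct and follows essentially the same route as the paper's: identify each $\nabla_{\bs z}g_\ell(\bar u,\cdot)$ with $\pm$ one of the hypothesis vectors via \eqref{eq:nabla_z} and the definition of $y^{(k)}$, rule out the simultaneously active pair $\{i,M+i\}$ using the Slater condition \eqref{slatericonc}, and conclude by \cref{prop:r2cq}. Your treatment of the antiparallel case is in fact slightly more explicit than the paper's, which simply asserts that \eqref{slatericonc} excludes $0=g_i(\bar u,\bs z)=g_{M+i}(\bar u,\bs z)$.
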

\begin{proof}
First, note that due to \eqref{slatericonc}, the constraint functions \eqref{gidef} cannot satisfy the condition
$0=g_i(\bar{u},\bs z)=g_{M+i}(\bar{u},\bs z)$
for some $\bs z\in\mathbb{R}^n$ and some $i\in\{1,\ldots ,M\}$. Therefore, the set $\{i,j\}\subseteq\{1,\ldots ,p=2M\}$ with $i\neq j$ in \eqref{r2cq} can always be written in one of the forms
\[
\{i,j\}=\{i',j'\}\quad\mbox{or}\quad \{i,j\}=\{i',M+j'\}\quad\mbox{or}\quad \{i,j\}=\{M+i',M+j'\}
\]
for some $i',j'\in\{1,\ldots M\}$ with $i'\neq j'$. In any case, due to $\eqref{eq:nabla_z}$ and the definition of $y^{(k)}$, the gradient $\nabla_{\bs z} g_i(\bar{u},\bs z)$ coincides, up to the sign, with the first vector above. Likewise, $\nabla_{\bs z} g_j(\bar{u},\bs z)$ coincides (up to the sign) with the second vector above. Therefore, our assumption on the linear independence of these two vectors implies  
the conclusion of 
\eqref{r2cq} .
\end{proof}

\section{Numerical results for linear governing equation}
\label{sec:example-linear}
We now use the linear-quadratic problem from \cref{ex:linear} to study
the approximation of the joint chance constraint and the behavior of
optimal controls. First, we
show that the integration over $\Omega$ in the objective
\eqref{eq:ex1-J} can be done analytically for a Gaussian random
field. For that purpose, assume that the law of the Gaussian
measure is $\mathcal N(\xi_0,\mathcal C_0)$.  Using the linearity of
the governing equation implies that, for fixed control $u$, the state
$y$ is also Gaussian. Thus, integration over $\Omega$ of the
quadratic in \eqref{eq:ex1-J} can be performed analytically,
\cite[Remark 1.2.9]{DaPratoZabczyk02}, and results in the reduced
problem
\begin{equation}\label{eq:optcon-red}
        \begin{aligned}
        & \underset{\mathclap{u\in L^2(\D)}}{\text{minimize}}
        & & \tilde{\mathcal J}(u) := \frac 12
          \int_\D (y_{0}^u - y_d)^2 \,d \bs x
  + \frac\alpha 2 \int_\D u^2\,d\bs x + \text{Tr}(A^{-1}B\mathcal C_0
  B^\star A^{-\star}),\\
  &\text{subject to } && \text{the joint chance constraint }\eqref{eq:chance},
        \end{aligned}
\end{equation}
where $A=-\Delta$ and $B:L^2(\partial \D_2)\mapsto H^{-1}(\D)$ and
$y_{0}^u = A^{-1}(f+u + B\xi_0)$ is the mean of the distribution of
states. Moreover, ``Tr'' denotes the trace of an operator. Note that the
last term in~\eqref{eq:optcon-red} is independent of $u$ and thus can
be neglected in optimization. Next, we describe the concrete domain and parameters for the problem \eqref{eq:ex1-J},
\eqref{eq:state_linear}, and \eqref{eq:chance}.

We choose the domain $\D=(0,1)^2\subset \mathbb R^2$ and divide the boundary into $\partial D_2=\{0\}\times [0,1]$ and $\partial
D_1=\partial D\setminus \partial D_2$. The Laplace operator uses zero
Dirichlet boundary conditions on $\partial D_1$ and Neumann boundary
conditions on $\partial D_2$. In addition, $y_d = \frac{1}{10} \cos(2\pi
x_1)\sin(2\pi x_2)$, $f\equiv 0$, $\alpha=10^{-5}$. The uncertain parameter field enters as Neumann data on the one-dimensional domain
$\partial D_2$. This data follows an infinite-dimensional Gaussian
distribution with mean $\xi_0 \equiv 0$, and a covariance operator 
given by the inverse elliptic PDE operator $\mathcal C_0 =
\gamma(-\partial_{{x_2 x_2}})^{-1}$, with homogeneous Dirichlet conditions at the boundary of $\partial D_2$, i.e., at the
two points $(0,0)$ and $(0,1)$, and with $\gamma=4$. This covariance
operator has the eigenfunctions $\sin(k\bs x_2)$, $k=1,2,\ldots$ with
corresponding eigenvalues $4(k\pi)^{-2}$. Samples from this
distribution, and of the distribution of the two-dimensional state variable $y(\omega)$ are shown in
\cref{fig:C0}. This figure also shows a comparison of the KL expansion eigenvalues of $\xi$ and $y$, see \eqref{eq:kl-xi} and \eqref{eq:kl-eta}, respectively. As can be seen, the KL factors $\lambda_i^y$ of $y(\omega)$ decay much faster despite corresponding to a Gaussian random field defined over a two-dimensional domain (note that $\xi$ is defined over a one-dimensional domain). This faster decay shows that the random variable $y(\omega)$ can be well approximated in a lower dimension $K$.

\begin{figure}[bt]\centering
  \begin{tikzpicture}
    \begin{axis}[height=7cm, width=6.8cm,
        xmin=0,
        xmax=1,
        compat=1.3,
        legend pos= north east,
        legend style={empty legend}]
    \addplot[color=green!40!black,mark=none,thick] table
        [x index=0,y index=1]{data/samples-of-xi.dat};
    \addplot[color=green!55!black,mark=none,thick] table
        [x index=0,y index=2]{data/samples-of-xi.dat};
    \addplot[color=green!70!black,mark=none,thick] table
        [x index=0,y index=3]{data/samples-of-xi.dat};
    \addplot[color=green!80!black,mark=none,thick] table
        [x index=0,y index=4]{data/samples-of-xi.dat};
      \addlegendentry{samples $\xi(\omega)$}
    \end{axis}
  \end{tikzpicture}\hspace{.2cm}
  \begin{tikzpicture}
  \node at (0,5cm) {\includegraphics[width=1.8cm]{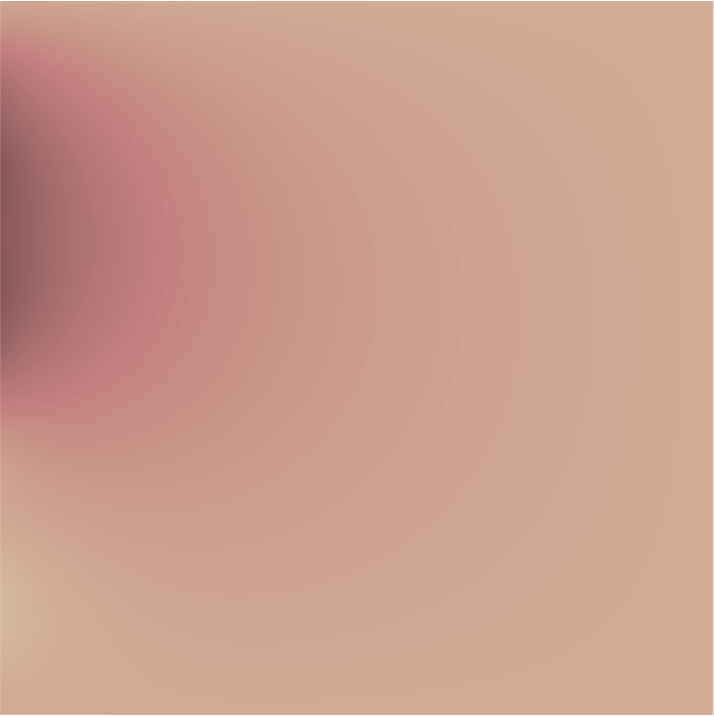}};
   \node at (0,3cm) {\includegraphics[width=1.8cm]{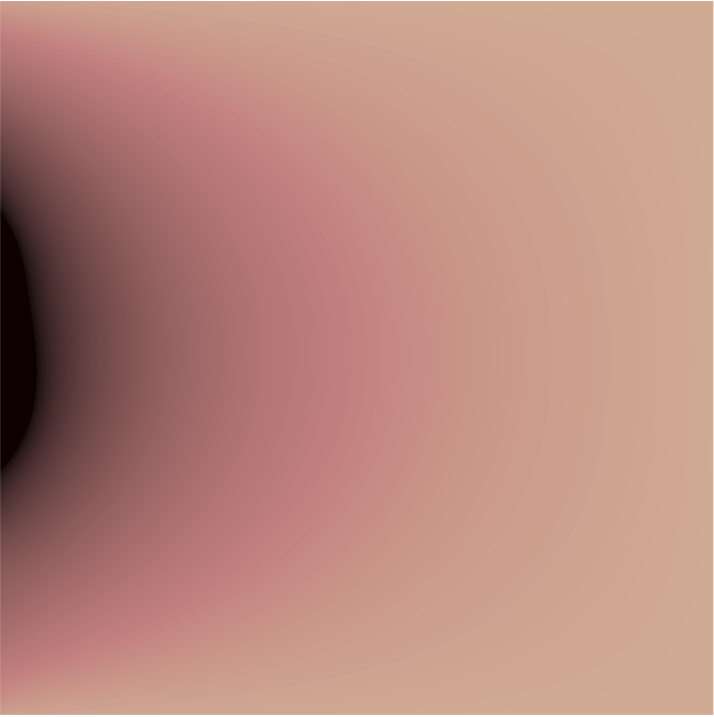}};
    \node at (0,1cm) {\includegraphics[width=1.8cm]{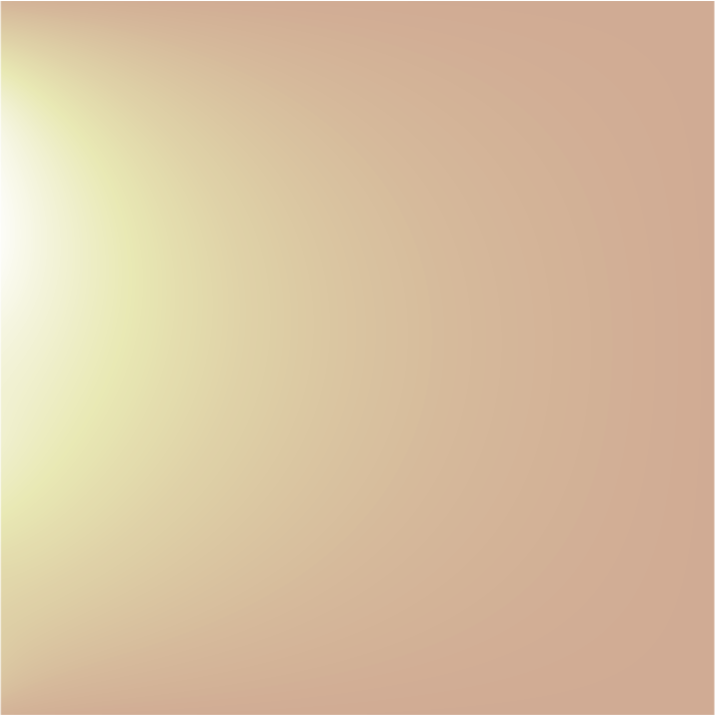}};
   \node at (1.3,3cm) {\includegraphics[width=0.7cm]{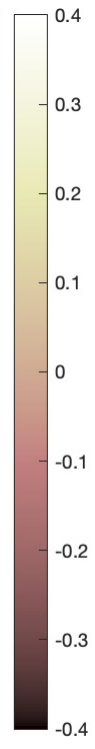}};
  \end{tikzpicture}\hspace{.2cm}
  \begin{tikzpicture}
    \begin{semilogyaxis}[height=7cm, width=6.4cm,
        xmin=1,
        xmax=50,
        ymin=1e-7,
        compat=1.3,
        legend pos= north east]
      \addplot[color=green!60!black,mark=none,very thick] table
        [x index=0,y index=1]{data/KLcoeffs128.dat};
          \addlegendentry{\small $\lambda_k^\xi$ in \eqref{eq:kl-xi}}
        \addplot[color=pink!65!black,mark=none,very thick,dashed] table
                [x index=0,y index=2]{data/KLcoeffs128.dat};
        \addlegendentry{\small \ $\lambda_k^y$ in \eqref{eq:kl-eta}}
    \end{semilogyaxis}
  \end{tikzpicture}
  \caption{The left figure shows samples from the distribution of the random boundary data
    $\xi(\omega)$. The middle figure shows samples from the distribution of $y(\omega)-y_{0}^u$, the state with zero mean. 
    The right figure shows the normalized (i.e., $\lambda_1$ is scaled to $1$) eigenvalue factors in the KL expansion \eqref{eq:kl-xi} of $\xi$ (green, solid), and in the KL expansion \eqref{eq:kl-xi} of $y$ (dark pink, dashed). 
 \label{fig:C0}}
\end{figure}

\subsection{Computational aspects}
A finite difference approximation (i.e., the five-point stencil) is used on a mesh of $n\times n$ points to discretize the Laplacian $\Delta$ in the governing equation on the spatial domain $\D$. Finite differences are also used to discretize the one-dimensional second derivative operator in the definition of the covariance $\mathcal C_0$. Solving \eqref{eq:optcon-red} requires repeated solution of the governing equation and thus inversion of the discretization of $A$. Thus, we compute the Choleski factorization of the system matrix upfront and reuse these factors throughout the solution process. 
Unless otherwise specified, we use $n=128$, i.e., the discretized state has a dimension of $n^2=16,384$. We discretize the state constraint on the same grid as the governing equation, i.e., $M=n^2$. Due to the rapid decay of the Karhunen-Loeve factors for the expansion of $y(\omega)$ shown in \Cref{fig:C0}, we use $K=20$ in the expansion, unless otherwise specified.
To obtain samples of the uniform distribution on the sphere, we first generate MC or QMC samples from the standard Gaussian distribution in $\mathbb R^K$, where we use the Halton sequence for QMC. These samples are then normalized to unit length, a well-known technique for obtaining samples from the uniform distribution on the sphere. In the context of QMC, this approach can be further improved by using more efficient low-discrepancy sequences specifically designed for the sphere, e.g., \cite{Aistleitner2012}.

\subsection{Approximation of chance constraint probability function}
First, we compare different approximations of the
probability \eqref{eq:chance}, where all tests are performed at the
nominal control $u=\frac 15 \sin(2\pi x_1)\cos(\pi x_2)$. We compare different sampling methods for an increasing number of samples, namely standard MC sampling in $\mathbb{R}^K$, as well as spherical-radial MC and quasi Monte Carlo (QMC) sampling on $\mathbb{S}^{K-1}$ . 

The results for the root mean square error for two different sets of upper and lower bounds for the state are shown in \cref{fig:chance-conv}. Here, we used $10^8$ samples from the standard MC method to compute a highly accurate estimate of the exact probability. 
We observe the expected
$1/\sqrt{N}$ convergence of the standard and the SRD-based MC methods. Moreover, we find
that SRD-MC requires up to 4 times
fewer samples than the standard MC method for the same accuracy. This
difference is more pronounced in the right figure, which is for a probability \eqref{eq:chance} close to 1. This is a first numerical confirmation of \cref{lem:varred1}, that is, the result that SRD-MC outperforms standard MC as $p\to 1$.  In both
figures, spherical-radial QMC significantly outperforms the MC methods, yielding a convergence rate in between $1/\sqrt{N}$ and
$1/N$. For example, the QMC-based estimate achieves an error similar to that with 2000 samples as the other methods with $10^5$ samples (left figure). The figure on the right (i.e., case $p$ is close to 1) shows that the spherical-radial QMC sampling obtains an error similar to that of the SRD MC with 40,000 and the standard MC method with 100,000 samples.

\begin{figure}\centering 
  \begin{tikzpicture}\centering
    \begin{loglogaxis}[title={%
          $p_{\text{true}}\approx 0.6496$},
        xlabel=\# samples N,
        ylabel={RMSE},
        xmin=10, xmax=1e5, ymax=0.2, ymin=3e-5,
        height=7cm, width=7.6cm,
        ticklabel style = {font=\small}]
      \addplot[green!80!black,very thick] table [x=N,y=vanMC] {data/biyupylow03k20_rmse.txt};
      \addplot[blue!80!black,very thick] table [x=N,y=MC] {data/biyupylow03k20_rmse.txt};
      \addplot[red!80!black,very thick] table [x=N,y=QMC] {data/biyupylow03k20_rmse.txt};
      \addplot[dashed,black,thick] coordinates {
        (10,5e-2)
        (100000,5e-4)
      };
      \addplot[dotted,blue,thick] coordinates {
        (10,2e-2)
        (1000,2e-4)
      };
    \end{loglogaxis}
  \end{tikzpicture}
  \begin{tikzpicture}\centering
    \begin{loglogaxis}[title={%
          $p_{\text{true}}\approx 0.9848$},
        xlabel=\# samples N,
        xmax=1e5, xmin=100, ymax=0.2, ymin=3e-5,
        height=7cm, width=6.8cm,
        ticklabel style = {font=\small},
        legend style={legend pos=north east,font=\small} ]
      \addplot[green!80!black,very thick] table [x=N,y=vanMC] {data/biyupylow07k20_rmse.txt};
      \addlegendentry{MC}
      \addplot[blue!80!black,very thick] table [x=N,y=MC] {data/biyupylow07k20_rmse.txt};
      \addlegendentry{sph-rad MC}
      \addplot[red!80!black,very thick] table [x=N,y=QMC] {data/biyupylow07k20_rmse.txt};
      \addlegendentry{sph-rad QMC}
      \addplot[dashed,black,thick] coordinates {
        (10,5e-2)
        (100000,5e-4)
      };
      \addlegendentry{$1/\sqrt{N}$}
      \addplot[dotted,blue,thick] coordinates {
        (10,2e-2)
        (1000,2e-4)
      };
      \addlegendentry{$1/{N}$}
    \end{loglogaxis}
  \end{tikzpicture}  
  \caption{\label{fig:chance-conv}Root mean squared error (RMSE) of probability estimation is shown on $y$-axis for different sampling methods and different number $N$ of MC samples ($x$-axis).  The left plot is for $\underline y\equiv -0.3$, $\bar
  y\equiv 0.3$, corresponding to the probability $p\approx 0.6496$.  The right plot uses the wider bounds $\underline y\equiv -0.7$, $\bar y\equiv 0.7$, resulting in $p\approx 0.9848$. }
\end{figure}
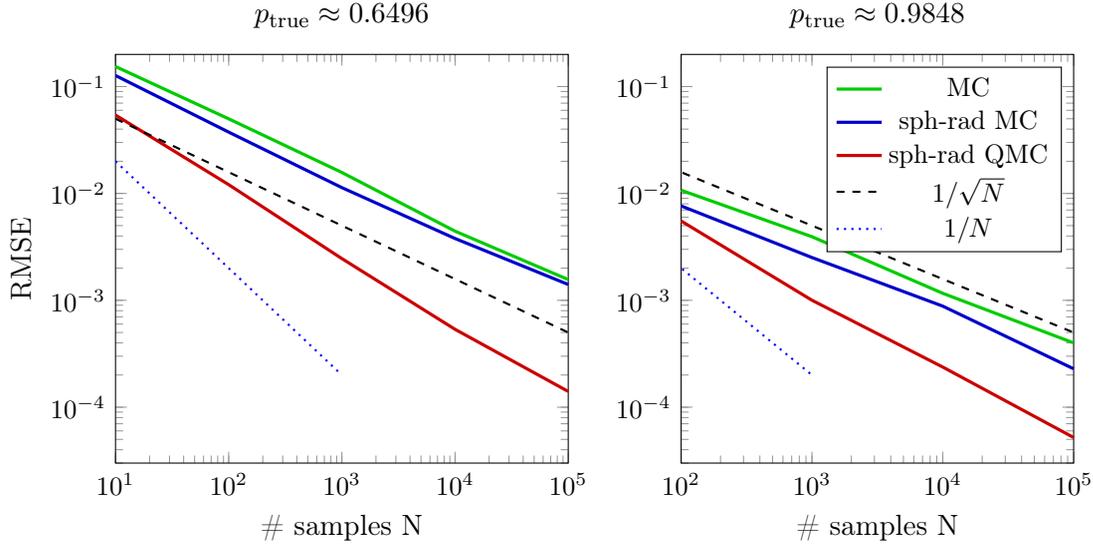

Furthermore, on the left in \cref{fig:chance-conv-KL}, we study the influence of the
number of KL modes (and therefore the dimension) used for the state variable $y$ defined in
\eqref{eq:kl-eta}. To compute a reference solution, we use $K=30$ KL modes and $10^7$ QMC samples. As can be seen, down to an RMSE of about $2\cdot 10^{-3}$ (obtained with 1000 samples), there is little difference between $K=10,15,20$.
The KL truncation error
starts to dominate the overall error at an RMSE of $2\cdot 10^{-3}$ for 10 KL modes and an RMSE of $5\cdot 10^{-4}$ for 20 KL modes.

\begin{figure}\centering
  \begin{tikzpicture}\centering
    \begin{loglogaxis}[
        xlabel=\# samples N,
        ylabel={RMSE},
        xmin=10, xmax=1e5, ymax=0.2, ymin=3e-5,
        height=7cm, width=7cm,
        ticklabel style = {font=\small}]
      \addplot[red!80!black, very thick] table [x=N,y=dim20] {data/biyupylow03kXX_rmse.txt};
      \addlegendentry{$K=20$}
      \addplot[red!80!black, very thick, dotted] table [x=N,y=dim15] {data/biyupylow03kXX_rmse.txt};
      \addlegendentry{$K=15$}
      \addplot[red!80!black, very thick, dashed] table [x=N,y=dim10] {data/biyupylow03kXX_rmse.txt};
      \addlegendentry{$K=10$}
    \end{loglogaxis}
  \end{tikzpicture}\hspace{5mm}
  \begin{tikzpicture}\centering
    \begin{axis}[
        xlabel=probability $p$,
        height=7cm, width=7cm,
        legend pos=south west,
        legend columns = 2,
        scaled ticks = false,
        yticklabel style={
        /pgf/number format/fixed,
        font=\small,
        },
        ]
      \addplot[green!80!black, thick,  mark=*, only marks] table [x
        index=1,y expr={500*\thisrow{var/(1-mean)}}] {data/var-red-vanMC.dat};
      \addlegendentry{$\frac{\VMC}{(1-p)}\:\:\:$}
      \addplot[blue!80!black, thick, mark=square*, only marks] table
              [x index=1,y expr={500*\thisrow{var/(1-mean)}}]
              {data/var-red-sph.dat};
      \addlegendentry{$\frac{\VSRD}{(1-p)}$}
    \end{axis}
  \end{tikzpicture}
  \caption{Left: QMC sampling with SRD for
    different number $K$ of KL modes. Shown is the RMSE of the
    probability estimate for the same setup also used on the left of
    \cref{fig:chance-conv}, i.e., with $\underline y\equiv -0.3$,
    $\bar y\equiv 0.3$. Right: Variance normalized by $(1-p)$ using
    standard and SRD MC samples for the problem described in
    \cref{sec:example-linear}. The difference in $p$ is due to using
    different bounds $\bar y = -\underline y\equiv
    0.5,0.6,0.7,0.8,0.9$ (from left to right, going from more to less
    restrictive). The variance is estimated using 100 MC
    simulations, each using $N=500$ samples.
  \label{fig:chance-conv-KL}}
\end{figure}
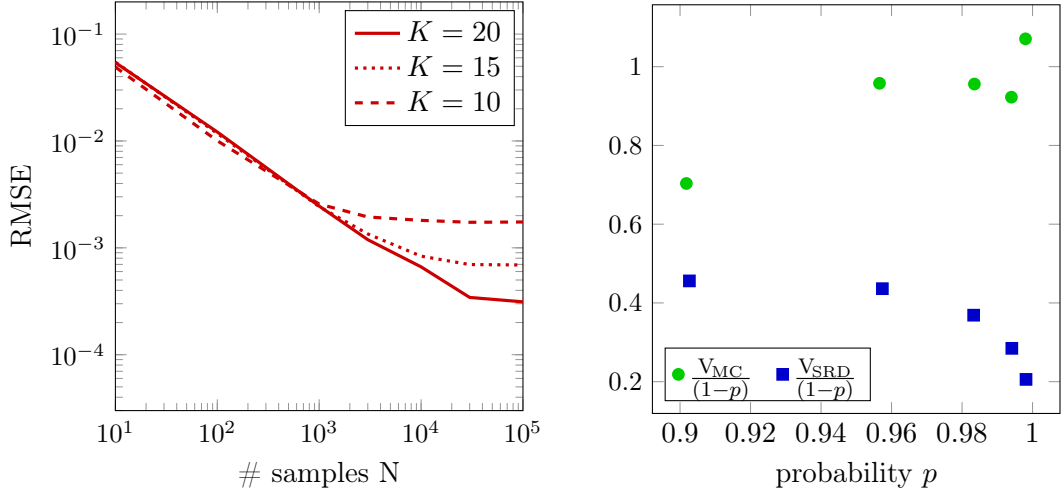

Finally, to verify \cref{lem:varred1} numerically, we
compute the variance of the standard MC estimator
\eqref{eq:stdMC} and the SRD MC estimator
\eqref{srdestimator}. The results are shown on the right in
\cref{fig:chance-conv-KL}, where we divide the numerical
approximation for the variances $\VSRD$ and $\VMC$ defined in
\eqref{SRD-MC} by $(1-p)$ for visualization purposes.
The figure
shows the reduced variance of the SRD estimator, i.e.,
the MC estimator based on the SRD substantially improves over the standard estimator for large $p$. In particular, the ratio
\eqref{eq:SRD/MC} decreases as $p\to 1$.

\subsection{Optimal controls under chance constraints}
To solve the optimal control problem, we employ a sequential quadratic programming (SQP) method, in which we approximate the second derivative of the chance constraint using the BFGS method
\cite{NocedalWright06}. For ease of visualization, we use the previously discussed problem, but only with an upper-state constraint of $\bar y\equiv 0.3$.  In \cref{fig:lin-opt-sol} we show the optimal
control for $p=0.9$ and samples from the state variable. Furthermore, we show the optimal control for $p=0.98$. We observe that the optimal control is quite sensitive to $p$; for $p$ closer to 1 it takes much smaller values to ensure that the states are sufficiently likely to satisfy the bound constraint at all points.

\begin{figure}[tb]\centering
     {\includegraphics[,trim=0
        0 0 0, clip, width=0.47\columnwidth]{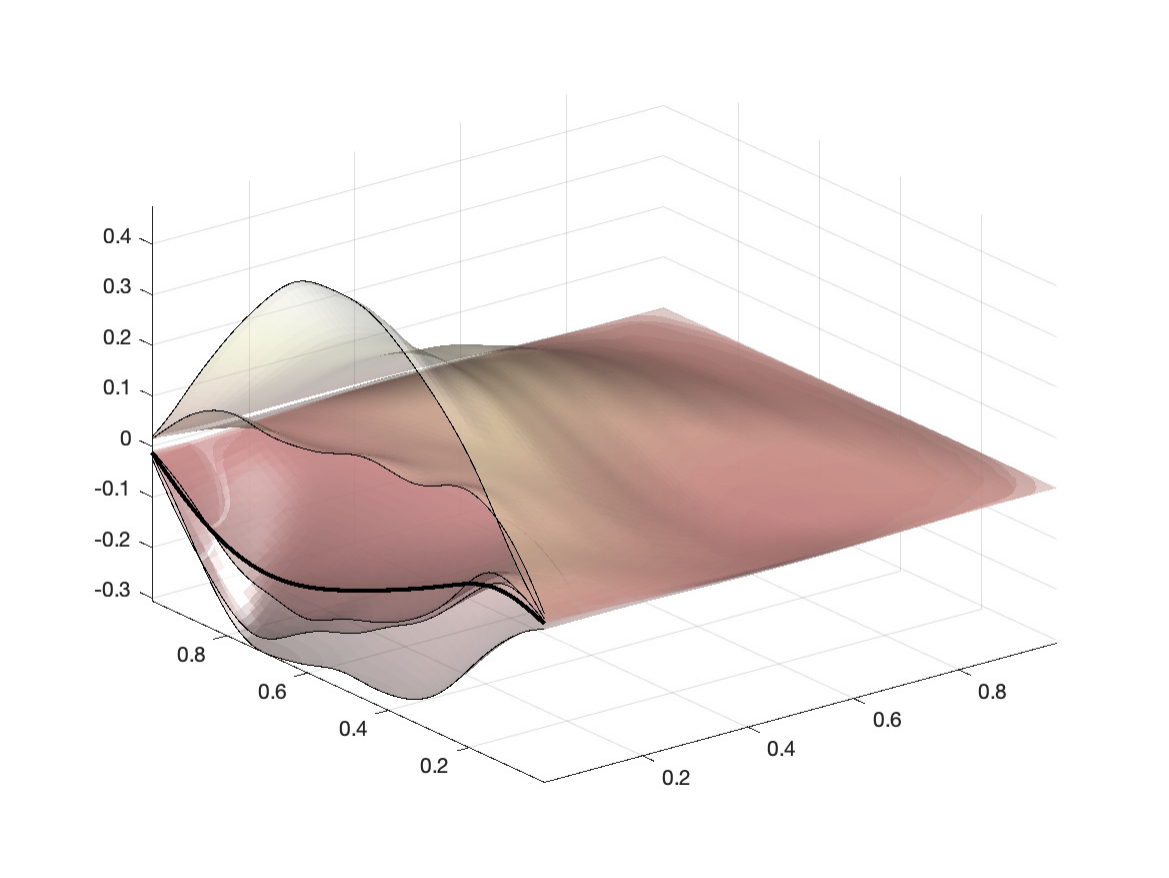}}\hspace{0cm}
        \begin{tikzpicture}
    \begin{axis}[width=.42\columnwidth,
        xmin=0,
        xmax=1,
        ymin=-0.05,
        ymax=0.5,
        compat=1.3,
        ylabel={$\max_{x_1}(y(x_1,x_2))$},
        xlabel={$x_2$},
        legend pos= north east,
        height=5.5cm]
      \addplot[color=blue!50!green,mark=none,thick,dashed] table [x index=0,y
        index=2]{data/Ex1-no-const.dat};
      \addlegendentry{$\bar y$}
      \addplot[color=black!90!white,mark=none,thick] table [x index=0,y
      index=1]{data/Ex1-no-const.dat};
      \addlegendentry{mean}
      \foreach \nn in {5,...,15}
      {
        \addplot[color=black!50!white,mark=none] table
        [x index=0,y index=\nn]{data/Ex1-no-const.dat};
      }
      \addplot[color=blue!70!white,mark=none] table
              [x index=0,y index=8]{data/Ex1-no-const.dat};
      \addplot[color=blue!70!white,mark=none] table
        [x index=0,y index=13]{data/Ex1-no-const.dat};
      \addlegendentry{samples}
      \addplot[color=black!90!white,mark=none,thick] table [x
        index=0,y index=1]{data/Ex1-no-const.dat};
    \end{axis}
  \end{tikzpicture}
  {\includegraphics[width=0.35\textwidth]{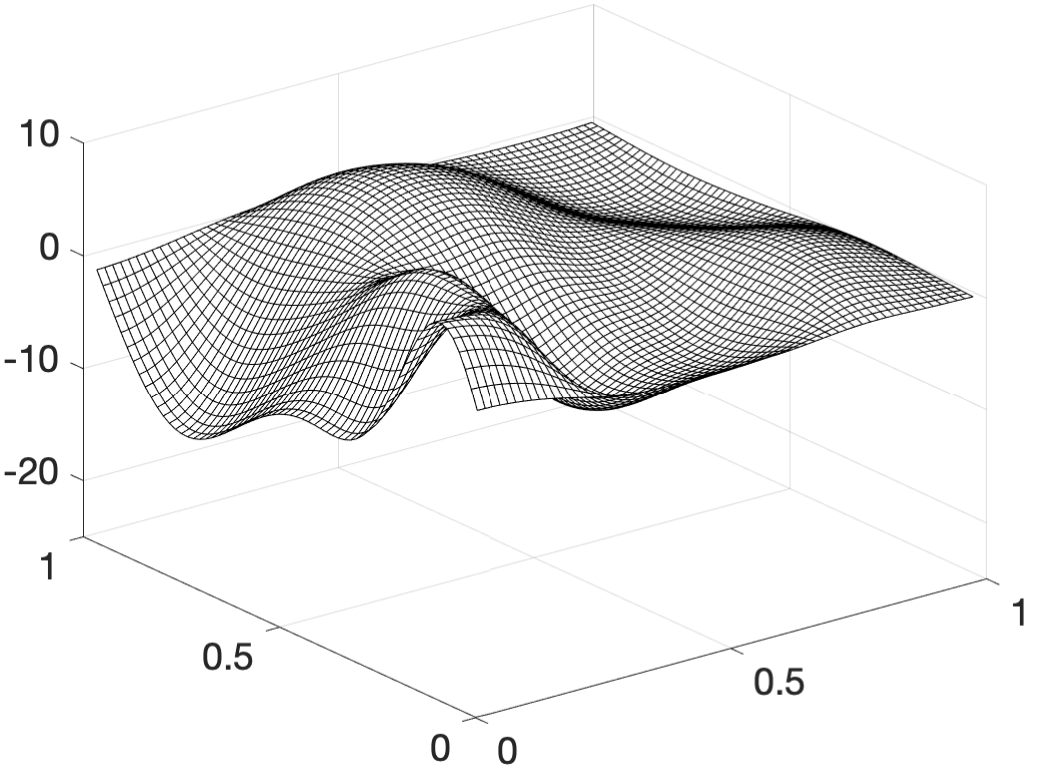}}\hspace{1cm}
 {\includegraphics[width=0.35\textwidth]{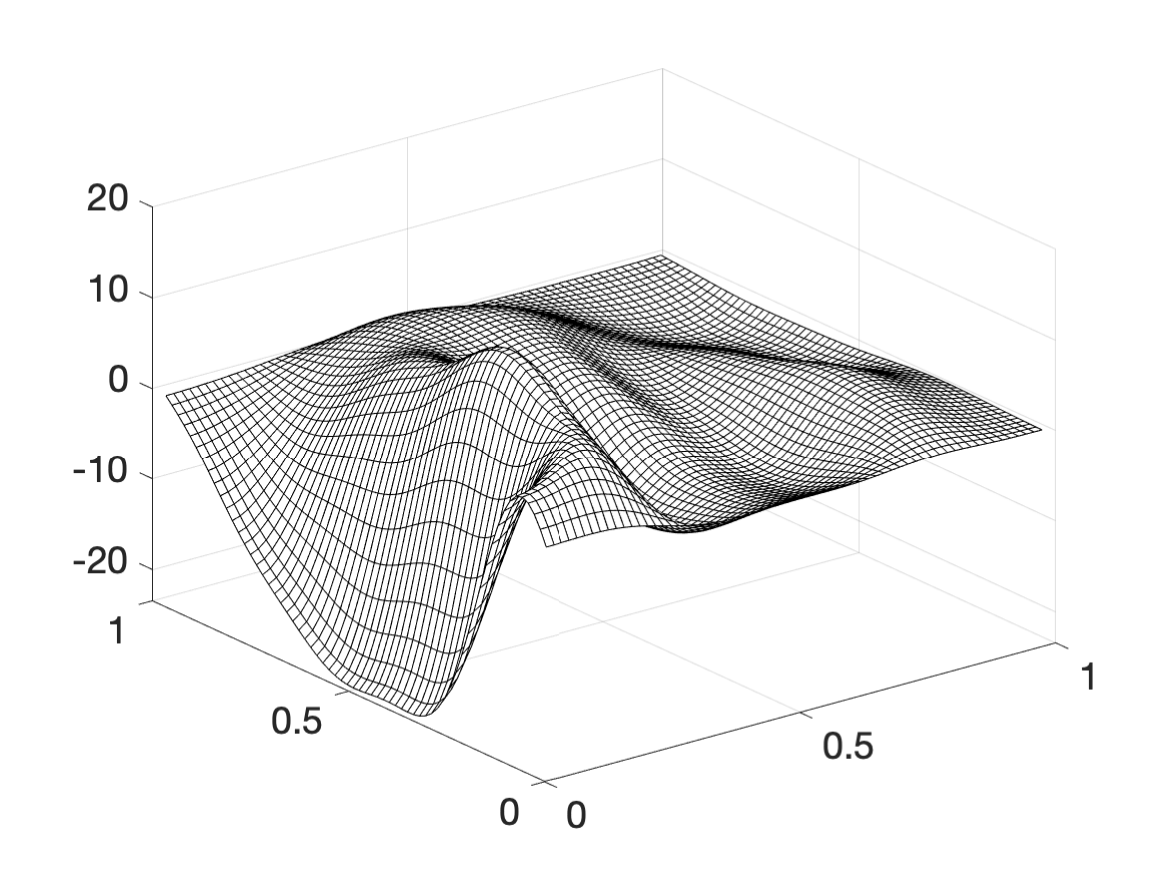}}
  \caption{\label{fig:lin-opt-sol}
    Optimal control solution for $p=0.9$ and unilateral chance constrains with $\bar y=0.3$. Shown on the top left are samples from the state variable $y$. The graph on the top right visualizes $\max_{x_1}(y(x_1,_2))$ as a function of $x_2$ for different samples of the state. The mean of the optimal state is shown in black, samples in gray 
    and blue.  The upper bound $\bar y$ is shown in green (dotted).  The
    two blue samples exceed the bound and thus do not satisfy the bound constraint. Shown at the bottom left is the corresponding optimal control $u$. For comparison, the optimal control $u$ for $p=0.98$ is shown on the bottom right.}
\end{figure}

\section{Numerical results for the bilinear control problem}\label{sec:example-bilinear}
We now extend the results from the previous section to \Cref{ex:bilinear}. Due to the bilinear structure, for a fixed control $u$, the map from the uncertain parameter $\xi$ to the state variable $y$ is linear. Although many methods from the previous section generalize to this case, the dependence of the PDE operator on the control makes the computation of derivatives of the probability function more involved and does not allow for the straightforward dimension reduction for the uncertain variable from \cref{subsec:KL}. 

\subsection{Setup and discretization}
In \Cref{ex:bilinear}, we let $\xi \sim \mathcal N (\xi_0, \mathcal C_0)$, where the covariance operator $\mathcal C_0:L^2(D)\to L^2(D)$ is given by the inverse elliptic operator $(-\alpha\Delta + I)^{-2}$; see~\cite{buithanh_computational_2013}.
 We only consider an upper bound for the state variable, i.e., formally set $\underline y=-\infty$ in \eqref{eq:chance}. 
We use finite elements to approximate the governing equation as well as the prior covariance operator. In particular, we use an $n$-dimensional finite element subspace $V_h\subset H^1_0(\D)$, and write the governing equation \eqref{eq:state_bilin_2} in the form
$[A + M(u)]y(\omega) = M (f+\xi(\omega))$,
where, for $v,g\in V_h$, we define the operators $A,M(u)$ and $M$ through the weak forms
\begin{equation*}
\langle A y, v\rangle := \int_\D \nabla y\cdot \nabla v  \, dx,\quad \langle M(u) y, v\rangle := \int_\D  u y v \, dx, \quad \langle Mg, v \rangle := \int_\D g v \, dx.
\end{equation*}
In the following, we associate each operator with its corresponding matrix, i.e.,
    $A_{ij} = \langle A \phi_i, \phi_j\rangle$,
where $\{\phi_1, \ldots, \phi_n\}$ is a basis for the space $V_h$. As usual in the finite element method, we denote by $\bs u\in \mathbb R^n$ the coefficient vector corresponding to the finite element function $u_h$ (which we simply denote by $u$ again), and similar for the all other variables.  
Expressed in terms of the above operators, the discretized prior covariance is $\mathcal C_0 = ([\alpha A+M]^{-1} M)^2$ and samples of $\bs \xi \sim \mathcal N (\bs \xi_0, \mathcal C_0)$ can then be generated (see~\cite[Sec.~3.6]{buithanh_computational_2013}) via
    $\bs \xi = \bs \xi_0 + [\alpha A+M]^{-1} L\bs z$,
where $\bs z\in \mathbb R^n$ is a draw from an i.i.d.~standard normal distribution, and $L$ satisfies $LL^T=M$.
For brevity and to emphasize that the following analysis does not depend on the choice of the covariance operator,
we denote $\tilde L := [\alpha A+M]^{-1}L$.  Next, we sketch the computation of the probability function and its gradient for this bilinear control problem.

\subsection{Chance constraint probability function and its gradient}
As before, for fixed control $u$ we use the SRD to generate samples of the (finite element coefficient) of the state variable as follows
\begin{equation}\label{eq:bilinear-yi}
    \bs y_i = [A+M(u)]^{-1}M \bs \xi_0 + r_i [A+M(u)]^{-1} M(\bs f+\tilde L\bs v_i)\qquad (i=1,\ldots,N),
\end{equation}
where $r_i$ are random draws from the Chi-distribution, and $\bs v_i$ draws from the uniform distribution of the unit sphere $\mathbb{S}^{n-1}\subset \mathbb R^n$, and $\bs f$ are the coefficients of the discretization of $f$.
We denote $\bs y_{0}^u := [A+M(u)]^{-1}M(\bs f+\bs \xi_0)$ and 
obtain the approximating probability function
\begin{equation}
    \tilde\varphi(u) := {N}^{-1}\sum_{i=1}^N F_{\chi}\big(\rho(u, \bs v_i)\big) \quad \text{ with } \rho(u,\bs v_i) \text{ defined in }\eqref{eq:rho_j}.
\end{equation}
Note that the partial derivatives \eqref{eq:nabla_u} and \eqref{eq:nabla_z}, which are needed in the expressions for $\nabla\tilde\varphi(u)$ given by \eqref{gradform}, involve the term $(A(u)^{-1})'h$. To obtain an explicit form of this derivative, we use the adjoint method to compute the gradient of $\tilde\varphi(u)$, i.e., we consider on the right-hand side in the definition of $\tilde\varphi(u)$ the variables $u, \bs y_i$ and $\bs y_{0}^u$ as independent of each other, but constrain the objective by the equation for the mean state and for the state samples \eqref{eq:bilinear-yi}. Following the (formal) Lagrange method, we compute $\nabla\tilde\varphi(u)$ by taking partial variations of the corresponding Lagrangian function \cite{Troltzsch10}. We find that in addition to solving the equation for the state mean and the $N$ equations in \eqref{eq:bilinear-yi}, the gradient computation requires another $(N+1)$ solutions of adjoint equations (which coincide with the state equation as the PDE operator is self-adjoint).  Using this gradient, one can now solve optimal control problems with chance constraints, as illustrated next for concrete data.

\subsection{Optimal controls under chance constraints}
We consider a problem of the form \cref{ex:bilinear}, without the control bound $u\ge 0$. Using the domain $\D = [0,1]^2$, we defining the auxiliary variable $y_d := \sin(2\pi x_1) \sin(2\pi x_2)$ and choose
     $f:=-\Delta y_d + y_d$, $u_0 :\equiv 1$, $\xi_0 :\equiv 0$.
This data is constructed in such a way that the optimal control is known in the absence of uncertainty. Namely, when the random variable $\xi(\omega)$ reduces to its mean $\xi_0=0$, the optimal control is $u\equiv 1$, and the optimal state is $y=y_d$. For the (inverse) PDE operator that defines the covariance $\mathcal C_0$, we use $\alpha=0.1$ and homogeneous boundary conditions.

\begin{figure}
    \centering
    \includegraphics[width=0.99\textwidth]{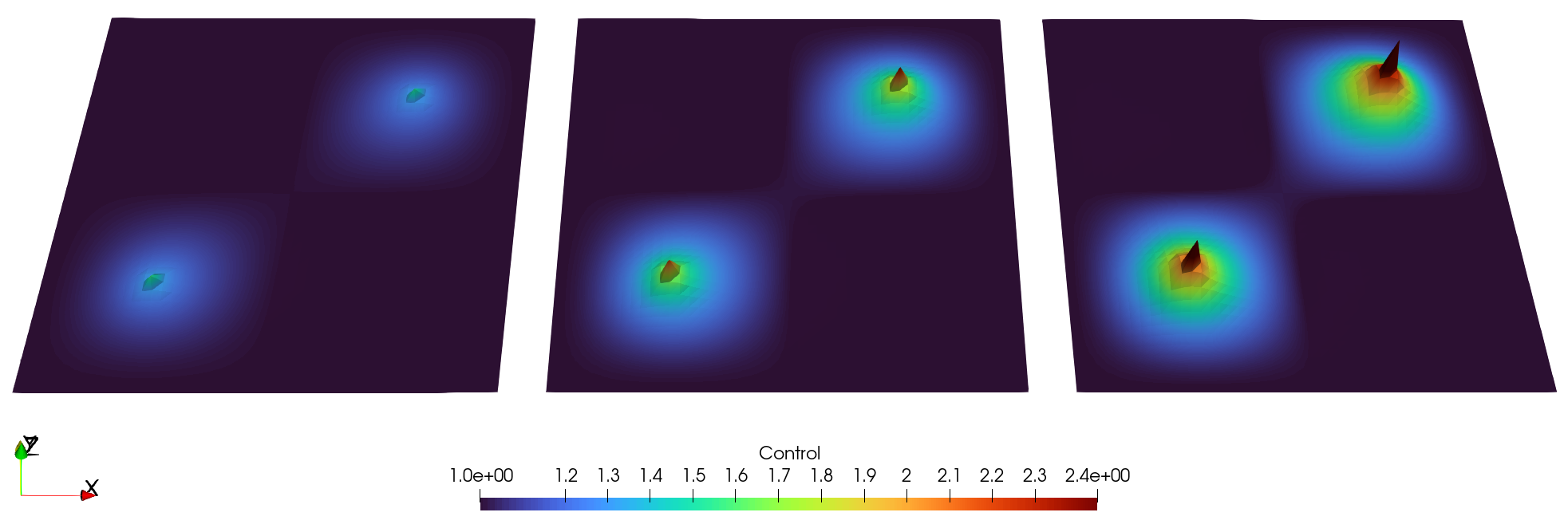}
    \caption{Optimal controls $u$ for $p=0.82, 0.84, 0.86$ (from left to right) for bilinear example. Note that the optimal controls develop spikes near the points where the state distribution is close to the upper bound.}
    \label{fig:bilinear:u}
\end{figure}
\begin{figure}
    \centering
    \pgfplotstableread[col sep=comma, row sep=\\]{%
p,J\\
0.80,0\\
0.82,3.2090548230041750e-03\\
0.84,1.7815677654847137e-02\\
0.86,5.1092425259438405e-02\\
0.88,9.7708132347274754e-02\\
0.90,1.7610522241182375e-01\\
}\bilineardata
  \begin{tikzpicture}
    \begin{axis}[
  width=6cm,
  height=3.0cm,
  yticklabel style={font=\scriptsize},
  xticklabel style={font=\scriptsize},
  scale only axis,
  yticklabel style={
          /pgf/number format/fixed,
          /pgf/number format/precision=5
  },
  scaled y ticks=false,
  xticklabel style={
          /pgf/number format/fixed,
          /pgf/number format/precision=2
  },
  scaled x ticks=false,
  xlabel={$p$},
  ylabel={${\mathcal J}$}
      ]
      \addplot+[thick, solid, blue, mark=o, mark options={fill=none}, mark size=2pt] table[x=p, y=J] {\bilineardata};
    \end{axis}
  \end{tikzpicture}\hspace{.5cm}
  \includegraphics[width=0.48\textwidth]{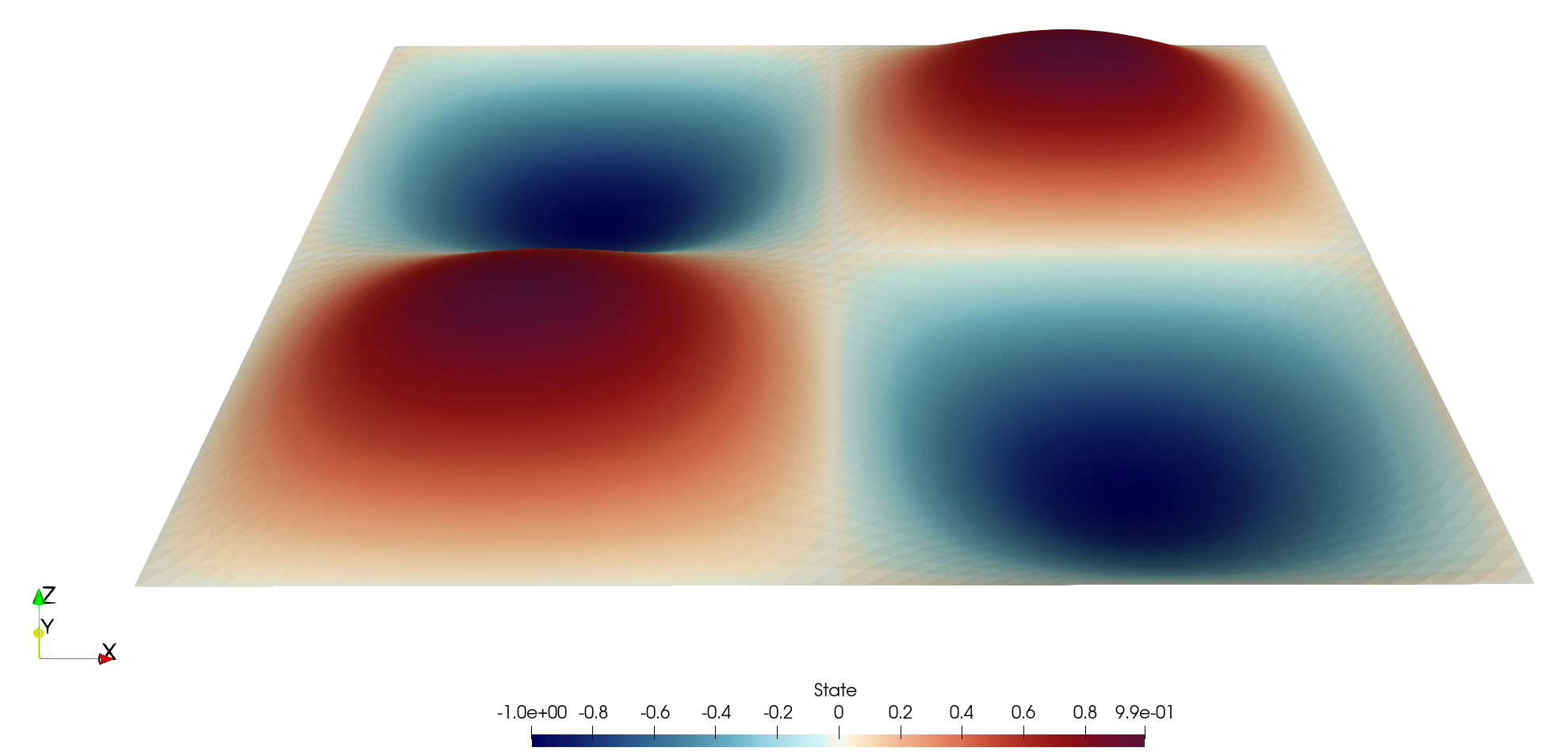}
    \caption{Left: Value of objective $\mathcal J$ at the optimal control $u$ as $p$ in the chance constraint is increased. Right: Mean of state corresponding to optimal control for $p=0.84$ shown in the middle in \cref{fig:bilinear:u}.}
    \label{fig:bilinear:y}
\end{figure}

We choose the upper bound $\bar y = 1.10$ for the states and first solve the optimization problem without the joint chance constraint.
For the resulting optimal control, the probability of exceeding $\bar y$ is $19.6\%$. We then add the chance constraint and solve the optimization problem for $p=0.82,0.84,0.86$, i.e., enforce an exceedance probability of less than $(1-p)$. The resulting optimal controls $u$ are shown in \cref{fig:bilinear:u}, and the objective values corresponding to these controls on the left in \cref{fig:bilinear:y}. As can be seen, the objective increases rapidly as $p$ increases. Also shown on the right in \cref{fig:bilinear:y} is the mean of the optimal state corresponding to $p=0.84$.
Note that the optimal controls $u$ are increased, compared to the solution $u\equiv 1$ without chance constraints, at the two points $(1/4,1/4)$ and $(3/4,3/4)$. This is due to the damping effect that the control has on the states, which is necessary when the mean of the state variable is close to the upper bound $\bar y$. The increasingly singular behavior of controls for increasing $p$ is a consequence of the unboundedness of Gaussian distributions, and the analysis of its behavior and its accurate approximation are interesting future research questions.

\section*{Appendix}
The subsequent two lemmas are slight extensions of the results from \cite[Thm.\ 10.2.1]{prekopa1995} and \cite[Lem.\ 2]{FarshbafShakerHenrionHoemberg18}, adapted for infinite-dimensional image spaces. Since a direct derivation from these results is not possible, we provide independent proofs for the readers' convenience.
\begin{lemma}\label{convexitylemma}
Let $U,\tilde{Y}$ be normed spaces, $K\subseteq \tilde{Y}$ be a convex subset, and $g:U\times\mathbb{R}^n\to\tilde{Y}$ be a linear mapping. Suppose further that $\bs \zeta$ is an $n$-dimensional random vector on some probability space $(\Omega, \mathcal{A},\mathbb{P})$ whose distribution has a log-concave density. Then, the set
\[
M:=\{u\in U:\mathbb{P}(g(u,\bs\zeta )\in K)\geq p\}
\]
is convex for arbitrary $p\in [0,1]$.
\end{lemma}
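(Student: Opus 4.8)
The plan is to run the classical Prékopa convexity argument, whose whole point here is that the randomness lives only in $\mathbb{R}^n$, so that the infinite-dimensional image space $\tilde{Y}$ enters solely through the convexity of $K$. Fix $u_1,u_2\in M$ and $\lambda\in[0,1]$, set $u_\lambda:=\lambda u_1+(1-\lambda)u_2$, and for each $u\in U$ introduce the section
\[
A_u:=\{\bs z\in\mathbb{R}^n:g(u,\bs z)\in K\}.
\]
First I would check that each $A_u$ is convex: since $g$ is linear on the product space $U\times\mathbb{R}^n$, the map $\bs z\mapsto g(u,\bs z)=g(u,\bs 0)+g(\bs 0,\bs z)$ is affine, and the preimage of the convex set $K$ under an affine map is convex. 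Being convex, $A_u\subseteq\mathbb{R}^n$ is Lebesgue measurable, so that $\mathbb{P}(g(u,\bs\zeta)\in K)=\mathbb{P}(\bs\zeta\in A_u)$ is well-defined, using that the distribution of $\bs\zeta$ is absolutely continuous (given by a density).

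The geometric core is the Minkowski-sum inclusion $\lambda A_{u_1}+(1-\lambda)A_{u_2}\subseteq A_{u_\lambda}$. To verify it, take $\bs z_1\in A_{u_1}$ and $\bs z_2\in A_{u_2}$, so that $g(u_1,\bs z_1)\in K$ and $g(u_2,\bs z_2)\in K$. Since $(u_\lambda,\lambda\bs z_1+(1-\lambda)\bs z_2)=\lambda(u_1,\bs z_1)+(1-\lambda)(u_2,\bs z_2)$ as an element of $U\times\mathbb{R}^n$, linearity of $g$ gives
\[
g\bigl(u_\lambda,\lambda\bs z_1+(1-\lambda)\bs z_2\bigr)=\lambda\,g(u_1,\bs z_1)+(1-\lambda)\,g(u_2,\bs z_2),
\]
which lies in $K$ by convexity of $K$. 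Hence $\lambda\bs z_1+(1-\lambda)\bs z_2\in A_{u_\lambda}$, proving the inclusion.

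Finally I would invoke the log-concavity of the distribution of $\bs\zeta$ (Prékopa's theorem: a log-concave density induces a log-concave measure). Combining monotonicity of $A\mapsto\mathbb{P}(\bs\zeta\in A)$ with the inclusion just established, and then the log-concavity inequality on the convex sets $A_{u_1},A_{u_2}$, one obtains
\[
\mathbb{P}(\bs\zeta\in A_{u_\lambda})\geq\mathbb{P}\bigl(\bs\zeta\in\lambda A_{u_1}+(1-\lambda)A_{u_2}\bigr)\geq\mathbb{P}(\bs\zeta\in A_{u_1})^{\lambda}\,\mathbb{P}(\bs\zeta\in A_{u_2})^{1-\lambda}\geq p^{\lambda}p^{1-\lambda}=p,
\]
where the last step uses $u_1,u_2\in M$. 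This yields $u_\lambda\in M$ and hence convexity of $M$.

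The main obstacle, and the reason the finite-dimensional statements in \cite{prekopa1995,FarshbafShakerHenrionHoemberg18} cannot be applied verbatim, is that $\tilde{Y}$ is infinite-dimensional, so log-concavity cannot be invoked directly on $K$. The resolution is precisely the reduction above: after passing to the sections $A_u\subseteq\mathbb{R}^n$, every probabilistic estimate is finite-dimensional, and $K$ contributes only the convexity that the sections inherit. The one technical point to treat with care is the measurability of the Minkowski sum $\lambda A_{u_1}+(1-\lambda)A_{u_2}$ needed for Prékopa's inequality; this is handled by observing that the sum is again convex, hence Lebesgue measurable.
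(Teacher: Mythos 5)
Your argument is correct and matches the paper's proof essentially line for line: the same sections $H(u)=\{\bs z:g(u,\bs z)\in K\}$, the same Minkowski-sum inclusion via linearity of $g$ and convexity of $K$, and the same appeal to Prékopa's theorem together with Lebesgue measurability of convex sets in $\mathbb{R}^n$. No gaps.
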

\begin{proof}
Let $p\in [0,1]$ be arbitrary. Define the set-valued mapping 
$H:U\rightrightarrows\mathbb{R}^n$ by
\[
H(u):=\{\bs z\in\mathbb{R}^n:g(u,\bs z)\in K\}\qquad (u\in U).
\]
Note that the images $H(u)$ are convex subsets of $\mathbb{R}^n$ for all $u\in U$ thanks to $K$ being convex and $g$ being linear. Although convex sets in $\mathbb{R}^n$ need not be Borel measurable, they are Lebesgue measurable (see \cite{lang86}) and hence, with the random vector $\bs \zeta$ having a density by assumption, the probability $\mathbb{P}(\bs \zeta\in H(u))$ is well defined for all $u\in U$. Now, let $u^1,u^2\in M$ and $\lambda\in [0,1]$ be arbitrarily given. This means that $\mathbb{P}(\bs \zeta\in H(u^1))\geq p$ and $\mathbb{P}(\bs \zeta\in H(u^2))\geq p$. We have to show that $\lambda u^1+(1-\lambda )u^2\in M$. We claim that 
\begin{equation}\label{convexinclu}
H(\lambda u^1+(1-\lambda )u^2)\supseteq\lambda H(u^1)+(1-\lambda )H(u^2),
\end{equation}
where the right hand side is the set of element-wise sums.
In fact, assuming $\bs z\in\lambda H(u^1)+(1-\lambda )H(u^2)$, there must exist $\bs z^1\in H(u^1)$ and $\bs z^2\in H(u^2)$ such that $\bs z=\lambda \bs z^1+(1-\lambda )\bs z^2$. Accordingly, $g(u^1,\bs z^1), g(u^2,\bs z^2)\in K$ and, hence, by linearity of $g$ and convexity of $K$:
\[
g(\lambda u^1+(1-\lambda )u^2,\bs z))=g(\lambda (u^1,\bs z^1)+(1-\lambda )(u^2,\bs z^2))=\lambda g(u^1,\bs z^1)+(1-\lambda )g(u^2,\bs z^2)\in K.
\]
This proves that $\bs z\in H(\lambda u^1+(1-\lambda )u^2)$. By \eqref{convexinclu}, we continue as
\begin{eqnarray*}
\mathbb{P}(\bs \zeta\in H(\lambda u^1+(1-\lambda )u^2))&\geq&\mathbb{P}(\bs \zeta\in\lambda H(u^1)+(1-\lambda )H(u^2))\\&\geq& [\mathbb{P}(\bs \zeta\in H(u^1))]^\lambda\cdot [\mathbb{P}(\bs \zeta\in H(u^2))]^{1-\lambda }\\&\geq&p^\lambda p^{1-\lambda}= p
\end{eqnarray*}
Here, the second inequality follows from Pr{\'e}kopas Theorem stating that a log-concave density (assumed here) induces a log-concave probability measure \cite[Theorem 4.2.1]{prekopa1995}. Thus, we have shown the desired relation $\lambda u^1+(1-\lambda )u^2\in M$.
\end{proof}
\begin{lemma}\label{wsus}
Let $U,\tilde{Y}$ be Banach spaces, $K\subseteq \tilde{Y}$ a weakly closed subset and $g:U\times\mathbb{R}^n\to\tilde{Y}$ a weakly sequentially continuous mapping. Suppose further that $\bs \zeta$ is an $n$-dimensional random vector on a probability space $(\Omega, \mathcal{A},\mathbb{P})$. Then, the probability function $\tilde{\varphi}:U\to [0,1]$ defined by
\[
\tilde{\varphi}(u):=\mathbb{P}(g(u,\bs \zeta)\in K)\qquad (u\in U)
\]
is weakly sequentially upper semicontinuous.
\end{lemma}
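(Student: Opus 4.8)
The plan is to reduce the statement to its sequential form and then transfer a pointwise upper-semicontinuity property of indicator functions into an integral inequality via the reverse Fatou lemma. Let $\mu$ denote the law of $\bs\zeta$ on $\mathbb{R}^n$, so that
\[
\tilde{\varphi}(u) = \int_{\mathbb{R}^n} \mathbf{1}_K\!\big(g(u,\bs z)\big)\,d\mu(\bs z).
\]
First I would check that this is well defined. For fixed $u$, weak sequential continuity of $g$ (taking the constant sequence $u_m = u$) makes $g(u,\cdot)\colon\mathbb{R}^n\to\tilde{Y}$ sequentially continuous into the weak topology of $\tilde{Y}$, hence continuous, since $\mathbb{R}^n$ is metrizable. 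Therefore the preimage $g(u,\cdot)^{-1}(K)$ of the weakly closed set $K$ is closed in $\mathbb{R}^n$, thus Borel and $\mu$-measurable, so the integrand is measurable and bounded and $\tilde{\varphi}(u)\in[0,1]$.

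Next, take any sequence $u_m \rightharpoonup u$ in $U$; the goal becomes $\limsup_m \tilde{\varphi}(u_m) \leq \tilde{\varphi}(u)$. The core step is the pointwise estimate
\[
\limsup_{m\to\infty} \mathbf{1}_K\!\big(g(u_m,\bs z)\big) \leq \mathbf{1}_K\!\big(g(u,\bs z)\big) \qquad (\bs z \in \mathbb{R}^n).
\]
To prove it, fix $\bs z$ and suppose the left-hand side equals $1$; then some subsequence $(u_{m_l})$ satisfies $g(u_{m_l},\bs z)\in K$ for all $l$. Since $u_{m_l}\rightharpoonup u$, weak sequential continuity of $g$ gives $g(u_{m_l},\bs z)\rightharpoonup g(u,\bs z)$, and weak (sequential) closedness of $K$ then forces $g(u,\bs z)\in K$, i.e.\ the right-hand side equals $1$. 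The inequality holds trivially otherwise.

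Finally, I would invoke the reverse Fatou lemma: the integrands $f_m := \mathbf{1}_K(g(u_m,\cdot))$ are measurable and dominated by the constant $1$, which is $\mu$-integrable because $\mu$ is a probability measure. Consequently,
\[
\limsup_{m\to\infty}\tilde{\varphi}(u_m) = \limsup_{m\to\infty}\int f_m\,d\mu \leq \int \Big(\limsup_{m\to\infty} f_m\Big)\,d\mu \leq \int \mathbf{1}_K\!\big(g(u,\bs z)\big)\,d\mu = \tilde{\varphi}(u),
\]
which is exactly the claimed weak sequential upper semicontinuity. The step I expect to require the most care is the pointwise \emph{limsup} inequality for the indicators: it is there that weak convergence of $u_m$, weak sequential continuity of $g$, and weak sequential closedness of $K$ must be combined through a subsequence argument. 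The passage from the pointwise bound to the integral bound is then a routine application of reverse Fatou, and the remaining technical hygiene is the measurability check ensuring all integrals are defined.
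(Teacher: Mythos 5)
Your proof is correct and follows essentially the same route as the paper: both arguments rest on the pointwise fact that weak convergence of the controls, weak sequential continuity of $g$, and weak (sequential) closedness of $K$ force $\limsup_m \mathbf{1}_K(g(u_m,\bs z))\le \mathbf{1}_K(g(u,\bs z))$, and then pass this through the integral using the uniform bound by $1$. The only cosmetic differences are that you integrate over $\mathbb{R}^n$ against the pushforward law and invoke reverse Fatou directly, whereas the paper works on $\Omega$ and splits into the event $A$ and its complement before applying dominated convergence --- the same estimate in different clothing.
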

\begin{proof}
Observe first that  
$\{\omega\in\Omega\mid g(u,\bs \zeta(\omega))\in K\}\in\mathcal{A}$ for arbitrarily $u\in U$
because the sets $\{\bs z\in\mathbb{R}^n\mid g(u,\bs z)\in K\}$ are closed by weak sequential continuity of $g$ and weak closedness of $K$.
Fix an arbitrary $\bar{u}$ and let $u_{n}\rightharpoonup \bar{u}$ be a weakly convergent sequence.
Denote by $u_{n_{l}}$ a subsequence such that 
\begin{equation}
\lim \sup_{n\rightarrow \infty }\tilde{\varphi} (u_{n})=\lim_{l\rightarrow \infty
}\tilde{\varphi} (u_{n_{l}}). \label{realseq}
\end{equation}%
Define the sets $A,A_n\in\mathcal A$ by
\begin{equation*}
A:=\left\{ \omega \in \Omega :g\left( \bar{u},\bs \zeta\left( \omega \right)
\right) \in K\right\} ;\quad A_{n}:=\left\{ \omega \in \Omega :g\left(
u_{n},\bs \zeta\left( \omega \right) \right) \in K\right\} \quad \left( n\in 
\mathbb{N}\right) .
\end{equation*}
Now, consider an arbitrary $\omega \in
\Omega \setminus A$. Then, $g\left( \bar{u},\bs \zeta\left( \omega \right)\right)\notin K$. Since $g$ is weakly sequentially continuous, we have $g\left( u_n,\bs \zeta\left( \omega \right)\right)\rightharpoonup g\left( \bar{u},\bs \zeta\left( \omega \right)\right)$. Since $\tilde{Y}\setminus K$ is weakly open, it follows that
\[
\forall\omega \in\Omega \setminus A\quad\exists n_{0}(\omega ):g(u_{n},\bs \zeta\left( \omega \right) )\notin K\quad \forall n\geq n_{0}(\omega ).
\]
Denoting by $\chi _{C}$
the characteristic function of a set $C$, it follows that
$\chi_{A_{n}}\left( \omega \right) \rightarrow 
0$ as $n\to\infty$ for all $
\omega \in \Omega \backslash A$. By the dominated convergence theorem, 
\begin{equation*}
\int_{\Omega \backslash A}\chi _{A_{n}}\left( \omega \right) \mathbb{P}
\left( d\omega \right) \rightarrow 0\quad \forall
\omega \in \Omega \backslash A \text{ as } n\rightarrow \infty.
\end{equation*}%
On the other hand, $\chi _{A_{n}}\left( \omega \right) \leq \chi _{A}\left(
\omega \right) =1$ for $\omega \in A$, whence%
\begin{eqnarray*}
\lim_{l\rightarrow \infty }\tilde{\varphi} (u_{n_{l}}) &=&\lim_{l\rightarrow \infty }%
\mathbb{P}\left( g\left( u_{n_{l}},\bs \zeta\right) \in K\right)
=\lim_{l\rightarrow \infty }\int_{\Omega }\chi _{A_{n_{l}}}\left( \omega
\right) \mathbb{P}\left( d\omega \right)\\
&\leq &\lim_{l\rightarrow \infty }\sup \int_{\Omega \backslash A}\chi_{A_{n_{l}}}\left( \omega \right) 
\mathbb{P}\left( d\omega \right)
+\lim_{l\rightarrow \infty }\sup \int_{A}\chi _{A_{n_{l}}}\left( \omega
\right) \mathbb{P}\left( d\omega \right) \\
&=&\lim_{l\rightarrow \infty }\sup \int_{A}\chi _{A_{n_{l}}}\left( \omega
\right) \mathbb{P}\left( d\omega \right)\\&\leq&\lim_{l\rightarrow \infty }\sup \int_{A}\mathbb{P}\left( d\omega\right) =\mathbb{P}\left( A\right) =\mathbb{P}\left( g\left( \bar{u},\bs \zeta\right)\in K\right)=\tilde{\varphi} (\bar{u}).
\end{eqnarray*}
Combining this with \eqref{realseq} results in $\varphi $ being weakly sequentially upper semicontinuous in $\bar{u}$.
\end{proof}

\section*{Acknowledgement}
The authors thank two anonymous referees for their valuable comments and suggestions that improved this article.

\bibliographystyle{siamplain}
\bibliography{chance-refs}
\end{document}